\providecommand{\tabularnewline}{\\}
\numberwithin{equation}{section}
\numberwithin{figure}{section}
\theoremstyle{plain}
\newtheorem{thm}{Theorem}
  \theoremstyle{definition}
  \newtheorem{defn}[thm]{Definition}
  \theoremstyle{plain}
  \newtheorem*{thm*}{Theorem}
  \theoremstyle{plain}
  \newtheorem{conjecture}[thm]{Conjecture}
  \theoremstyle{remark}
  \newtheorem*{rem*}{Remark}
  \theoremstyle{definition}
  \newtheorem*{example*}{Example}
  \theoremstyle{plain}
  \newtheorem{prop}[thm]{Proposition}
  \theoremstyle{plain}
  \newtheorem{lem}[thm]{Lemma}
  \theoremstyle{remark}
  \theoremstyle{plain}
  \newtheorem{cor}[thm]{Corollary}
\newenvironment{thmcust}[2][Theorem]{\begin{trivlist}
\item[\hskip \labelsep {\bfseries #1}\hskip \labelsep {\bfseries #2.}]}{\end{trivlist}}
\def\blfootnote{\xdef\@thefnmark{}\@footnotetext}
\begin{document}

\title{Counting $G$-Extensions by Discriminant}

\author{Evan P. Dummit}
\address{Dept. of Mathematics, University of Rochester,  915 Hylan Building, RC Box 270138, Rochester, NY 14627}
\email{edummit@ur.rochester.edu}

\begin{abstract}
The problem of analyzing the number of number field extensions $L/K$
with bounded (relative) discriminant has been the subject of renewed
interest in recent years, with significant advances made by Schmidt,
Ellenberg-Venkatesh, Bhargava, Bhargava-Shankar-Wang, and others. In this
paper, we use the geometry of numbers and invariant theory of finite
groups, in a manner similar to Ellenberg and Venkatesh, to give an
upper bound on the number of extensions $L/K$ with fixed degree,
bounded relative discriminant, and specified Galois closure.
\end{abstract}
\maketitle
\blfootnote{2010 \emph{Mathematics Subject Classification.} Primary 11R21 ; Secondary 11R29, 13A50, 11H06.}
\blfootnote{\emph{Keywords}: Discriminants, number field counting, $G$-extensions, discriminant counting, polynomial invariant theory, geometry of numbers.}
\section{Overview}

Over a century ago, Hermite showed that the number of number fields
of a given degree whose (absolute) discriminant is less than $X$
is finite. Thus, ordering number fields of a fixed degree (or fixed
Galois closure) by discriminant provides us with a variety of well-posed
counting problems.

For a fixed number field $K$, our primary interest is in analyzing
the asymptotics, as $X\to\infty$, of the number of extensions $L/K$,
of fixed degree and Galois closure, whose discriminant (norm) is less
than $X$. Providing exact asymptotics is quite difficult and has
been carried out in only a few cases.

In Section \ref{sec:Notation-and-Background}, we briefly review a
number of results on counting number fields by discriminant, and then in Section \ref{sec:Appendix-Invariants}
we review some necessary background on polynomial invariants attached
to representations of finite groups. 

In Section \ref{sec:Proof-of-permreps}, we then prove a general theorem
bounding from above the number of extensions of a given degree, bounded
discriminant, and specified Galois closure.  We give a prototypical example in Section \ref{sec:A-Generic-Example}, and then finish with some concluding remarks.

\section{Notation and Background\label{sec:Notation-and-Background}}

To introduce some notation, let $K$ be a number field and $L/K$
be an extension of degree $n$. Also let $\mathcal{O}_{L}$ and $\mathcal{O}_{K}$
be the rings of integers, and $D_{L}$ and $D_{K}$ be the absolute
discriminants, of $L$ and $K$ respectively. We also take $\mathrm{Nm}_{K/\mathbb{Q}}$
to be the absolute norm on ideals and $\mathcal{D}_{L/K}$ to be the
relative discriminant ideal.

We will understand $f(X)\sim g(X)$ to mean that ${\displaystyle \lim_{X\to\infty}{\displaystyle \dfrac{g(X)}{f(X)}}=1}$,
and $f(X)\ll g(X)$ to mean that $f(x)<c\, g(X)$ for some constant
$c>0$ and $X$ sufficiently large (where $c$ may depend on other
parameters such as $n$ and $\epsilon$ that will be clear from the
context). The group $G$ will also always refer to a finite group
equipped with an embedding into $S_{n}$, and is to be interpreted
as the Galois group of the Galois closure of $L/K$.

\subsection{Counting Extensions of Fixed Degree}
Our central problem is to count extensions $L/K$ where $[L:K]=n$.
\begin{defn}
For a fixed $K$ and $n$, we define $N_{K,n}(X)$ to be the number
of number fields $L$ (up to $K$-isomorphism) with extension degree
$[L:K]=n$ and absolute discriminant norm $\mathrm{Nm}_{K/\mathbb{Q}}(\mathcal{D}_{L/K})<X$.
\end{defn}

A folk conjecture, sometimes attributed to Linnik, says that 
\begin{equation*}
N_{K,n}(X)\sim C_{K,n}X\label{eq:Linnik}
\end{equation*}
for fixed $n$ and as $X\to\infty$, for some positive constant $C_{K,n}$
depending on $K$ and $n$. Even for the base field $K=\mathbb{Q}$,
the best known results for large $n$ are far away from this conjectured
result. Only in some low-degree cases $(n\leq5$) is this conjecture
proven: for general $K$, the case $n=2$ is an exercise in Kummer
theory, and the case $n=3$ for $K=\mathbb{Q}$ is due to Davenport
and Heilbronn \cite{Davenport-Heilbronn}, while for general $K$
it is due to Datskovsky and Wright \cite{Datskovsky-Wright}. For
$K=\mathbb{Q}$, the results for $n=4$ and $n=5$ are also known
and due to Cohen-Diaz y Diaz-Olivier and Bhargava \cite{Bhargava-3,Bhargava-4, Cohen-1}, (a slightly weaker exponent was first established by Kable-Yukie \cite{Yukie,Kable-Yukie}), and for general $K$ they are due to Bhargava-Shankar-Wang \cite{Bhargava-Shankar-Wang}.
However, these techniques are not expected to extend to higher-degree
extensions.

Our starting point for counting extensions of higher degree is the
following theorem of Schmidt \cite{Schmidt}:
\begin{thm*}[Schmidt]
For all $n$ and all base fields $K$,\begin{equation}
N_{K,n}(X)\ll X^{(n+2)/4}.\label{eq:Schmidt}\end{equation}

\end{thm*}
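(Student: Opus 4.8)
The plan is to reduce the counting of fields to the counting of monic integer polynomials, producing for each $L$ a generator whose size is controlled by $|D_L|$ via the geometry of numbers; I describe the argument for $K=\Q$ first and then indicate the changes for general $K$. Each degree-$n$ extension $L/\Q$ is generated by a single algebraic integer, and translating a generator by a rational integer changes neither $L$ nor $D_L$. I exploit this freedom by working in the rank-$(n-1)$ lattice $\mathcal{O}_L/\Z$, equivalently by normalizing a generator so that $\mathrm{Tr}_{L/\Q}(\theta)\in[0,n)$. Under the archimedean embeddings $\mathcal{O}_L\hookrightarrow L\otimes_\Q\mathbb{R}\cong\mathbb{R}^{r_1}\times\mathbb{C}^{r_2}$, the quotient $\mathcal{O}_L/\Z$ is a lattice of rank $n-1$ whose covolume is comparable to $\sqrt{|D_L|}$.

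Next, I would apply Minkowski's convex body theorem to a symmetric box in the trace-zero hyperplane to produce a nonzero $\bar\theta\in\mathcal{O}_L/\Z$ all of whose archimedean coordinates are $\ll |D_L|^{1/(2(n-1))}=:M$. Choosing the representative $\theta\in\mathcal{O}_L$ with $\mathrm{Tr}_{L/\Q}(\theta)\in[0,n)$, every conjugate of $\theta$ then satisfies $|\theta^{(i)}|\ll M$, while the linear coefficient $a_1=-\mathrm{Tr}_{L/\Q}(\theta)$ of its minimal polynomial is bounded independently of $X$. The crucial point is that this single normalization gains both a full degree in the dimension (rank $n-1$ rather than $n$) and a free coefficient in the polynomial count below.

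Assuming $\theta$ generates $L$, its minimal polynomial $x^n+a_1x^{n-1}+\cdots+a_n$ is monic of degree $n$ with $a_j\in\Z$, and since each $a_j$ is, up to sign, the $j$-th elementary symmetric function of the conjugates, we have $|a_j|\ll M^j$. The number of admissible coefficient vectors is therefore $\ll\prod_{j=2}^{n}M^{j}=M^{(n-1)(n+2)/2}$, as $a_1$ contributes only a bounded factor. Each such polynomial determines at most $n$ fields (one per root), and every $L$ with $|D_L|\le X$ arises this way once we take $M=X^{1/(2(n-1))}$; substituting yields $N_{\Q,n}(X)\ll X^{(n+2)/4}$, as desired.

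The main obstacle is justifying that the short vector $\theta$ can be taken to generate $L$ rather than a proper subfield. I would address this by working with several successive minima of $\mathcal{O}_L/\Z$ simultaneously, so that a small-coefficient combination of short, linearly independent elements avoids every proper subfield: the number of subfields is bounded in terms of $n$ alone, and the elements lying in a fixed $L'$ form a sublattice of strictly smaller rank, so excluding them costs only a constant. For general $K$ the same scheme applies with $\Z$ replaced by $\mathcal{O}_K$; one works in the rank-$(n-1)[K:\Q]$ lattice $\mathcal{O}_L/\mathcal{O}_K$, uses the geometry of numbers over $K$ together with $D_L=D_K^{\,n}\,\mathrm{Nm}_{K/\Q}(\mathcal{D}_{L/K})$, and counts minimal polynomials with coefficients in $\mathcal{O}_K$. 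The symmetric-function combinatorics is unchanged, so the exponent $(n+2)/4$ in $\mathrm{Nm}_{K/\Q}(\mathcal{D}_{L/K})$ persists, with the dependence on $K$ absorbed into the implied constant.
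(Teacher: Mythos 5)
Your reduction to counting minimal polynomials, and the arithmetic $M^{2+3+\cdots+n}=M^{n(n+1)/2-1}=X^{(n+2)/4}$ with $M=X^{1/(2(n-1))}$, are exactly the skeleton of the argument the paper attributes to Schmidt. The genuine gap is in your final paragraph, where you dispose of the generation problem: the claim that excluding the proper subfields ``costs only a constant'' is false at the scale at which you are working. Minkowski's theorem applied to the box of side $M\asymp|D_L|^{1/(2(n-1))}$ guarantees only on the order of \emph{one} nonzero point of $\mathcal{O}_L/\mathbb{Z}$, whereas a proper subfield $L'$ of degree $t$ over $\mathbb{Q}$ contributes a rank-$(t-1)$ sublattice $\mathcal{O}_{L'}/\mathbb{Z}$ whose covolume is governed by $D_{L'}$, which can be minuscule compared to $D_L$; that sublattice can therefore contain far \emph{more} points of the box than the full lattice has outside it (already for $L\supset\mathbb{Q}(\sqrt{2})$ the subfield contributes $\gg M$ points while the whole box need contain only $O(1)$ points of $\mathcal{O}_L/\mathbb{Z}$). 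So the shortest vector, and indeed all of the first several successive minima, may lie in $L'$. If you repair this as you suggest, by taking enough successive minima to escape every proper subfield and forming a bounded integral combination --- which is exactly what the paper's Proposition \ref{lem:trace-zero} does --- then the size of the resulting generator is controlled by the $t$-th successive minimum, which Minkowski's second theorem bounds only by $|D_L|^{1/(2(n-t))}$, not $|D_L|^{1/(2(n-1))}$. For $t$ as large as $n/2$ this gives a polynomial count of roughly $X^{(n+1)/2}$, which is worse than $X^{(n+2)/4}$ for every $n$.

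This is precisely the subtlety the paper flags in Section \ref{sec:Notation-and-Background}: Schmidt does not count fields in one step but counts chains of primitive extensions $K\subset L_1\subset\cdots\subset L_{t-1}\subset L$, so that the single-short-vector argument applies with the full exponent at each (primitive) step, and one then verifies that summing over all possible towers still yields the exponent $(n+2)/4$. Your proof needs this tower decomposition (or some substitute, e.g.\ counting imprimitive $L$ by first counting the subfield $L'$ and then the relative extension $L/L'$); as written, the case of fields with a large proper subfield is not covered, and that is the case the whole difficulty lives in.
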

The approach of Schmidt can be broadly interpreted as follows: if
$L/K$ is an extension of degree $n$, first use Minkowski's Lattice
Theorems to obtain an element $\alpha\in\mathcal{O}_{L}$ whose archimedean
norms are small (in terms of $X$). This gives bounds on the coefficients
of the minimal polynomial of $\alpha$; counting the number of possibilities
for $\alpha$ yields the upper bound on the number of possible extensions
$L/K$. We will note that some care is necessary in the above argument:
in fact, Schmidt actually counts chains of primitive extensions $K\subset L_{1}\subset\cdots\subset L_{t-1}\subset L$ to avoid possible issues arising from the existence of large-degree subfields.  (The overall exponent in $X$, ultimately, is independent of any assumption of primitivity.)

The best upper bound for general $n$ was established by Ellenberg
and Venkatesh \cite{EV-numberfields}:
\begin{thm*}[Ellenberg-Venkatesh]
\label{thm:(Ellenberg-Venkatesh)}  For all $n>2$
and all base fields $K$, \[
N_{K,n}(X)\ll(X\, D_{K}^{n}\, A_{n}^{[K:\mathbb{Q}]})^{\exp(C\sqrt{\log\, n})},\]
where $A_{n}$ is a constant depending only on $n$ and $C$ is an
absolute constant.
\end{thm*}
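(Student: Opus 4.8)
The plan is to extend Schmidt's geometry-of-numbers argument, but rather than extracting a single small algebraic integer and bounding the coefficients of its degree-$n$ minimal polynomial, I would extract \emph{several} small elements of $\mathcal{O}_L$ and exploit the multiplicative structure of $\mathcal{O}_L$ so that far less data is needed to reconstruct $L$ up to $K$-isomorphism. The guiding principle is that $L$ is determined by the ring $\mathcal{O}_L$, and a ring is determined by a multiplication table on a set of generators; if these generators can be chosen to be simultaneously small and few in number, then the number of admissible multiplication tables — and hence the number of fields — can be counted efficiently.

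First I would set up the lattice. Embedding $\mathcal{O}_L$ into $L\otimes_{\mathbb{Q}}\mathbb{R}$ via the archimedean places of $L$ (and, if convenient, passing to the quotient by the line $\mathcal{O}_K\cdot 1$ to remove the trivial direction), one obtains a lattice whose covolume is governed by $\sqrt{D_L}$. Since $D_L = D_K^{\,n}\,\mathrm{Nm}_{K/\mathbb{Q}}(\mathcal{D}_{L/K})$, the covolume is $\ll (X\,D_K^{\,n})^{1/2}$ up to a factor depending only on $n$ and on $[K:\mathbb{Q}]$, the latter entering through the constants in Minkowski's theorems and accounting for the factor $A_n^{[K:\mathbb{Q}]}$. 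By Minkowski's second theorem the successive minima satisfy $\prod_{i=1}^{n}\lambda_i \asymp \mathrm{covol}$, and I would fix a reduced basis $\omega_1 = 1, \omega_2, \dots, \omega_n$ with $\|\omega_i\| \ll_n \lambda_i$. The key elementary estimate is then that each product $\omega_i\omega_j \in \mathcal{O}_L$ has archimedean size $\ll_n \lambda_i\lambda_j$, so that when written as $\omega_i\omega_j = \sum_k c_{ij}^{\,k}\,\omega_k$ the structure constants $c_{ij}^{\,k}$ are integers bounded in terms of ratios of the $\lambda$'s.

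The crux is a purely algebraic generation statement: $\mathcal{O}_L$ (or a subring of bounded index, which is enough to pin down $L$) is generated as a $\mathbb{Z}$-algebra by a cleverly chosen subset of the reduced basis, and one is free to trade off the number of generators against the largeness of the elements used. Given such a generating set, the isomorphism class of $\mathcal{O}_L$ is determined by the finite list of structure constants among the generators, and I would bound the number of fields by the number of admissible integer tuples $(c_{ij}^{\,k})$, which is a product of the individual bounds and hence a power of $(X\,D_K^{\,n})^{1/2}$. The exponent is then obtained by optimizing over the generation parameter — balancing the number (or product-degree) of generators needed to span a rank-$n$ algebra against the counting cost — and it is precisely this saddle-point optimization, with optimal parameter of order $\sqrt{\log n}$, that produces the factor $\exp(C\sqrt{\log n})$.

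I expect the main obstacle to be exactly this generation-and-optimization step: establishing that a rank-$n$ order admits a generating set that is both small, in the successive-minima sense, and efficient for counting, and then choosing the parameters so as to extract the sub-polynomial exponent rather than the naive $\log_2 n$ or the linear-in-$n$ exponent of Schmidt. A secondary technical point requiring care is ensuring that distinct fields are not badly over-counted — that is, that bounding isomorphism classes of the pair $(\mathcal{O}_L, \text{generators})$ genuinely bounds the number of $L$ up to $K$-isomorphism — and that the constants absorbed into $A_n$ and into the Minkowski estimates are uniform across the family of extensions under consideration.
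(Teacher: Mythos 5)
First, note that the paper does not prove this statement: it is quoted from Ellenberg--Venkatesh \cite{EV-numberfields}, and the paper only describes their method in two sentences (count $r$-tuples of small elements of $\mathcal{O}_L$, then use the invariant theory of $S_n$ acting on $r$ copies of the permutation representation to reduce to counting integral points on a generically finite cover of affine space). Your opening move --- Minkowski's second theorem applied to $\mathcal{O}_L$ modulo the trivial direction, producing several elements bounded by the successive minima --- matches the first step of that method and of the proof of Theorem \ref{thm:Permreps} in this paper. But your central mechanism, reconstructing $L$ from the multiplication table of $\mathcal{O}_L$, is not the Ellenberg--Venkatesh mechanism, and as stated it has a genuine gap. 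Structure constants $c_{ij}^{\,k}$ only make sense relative to a $\mathbb{Z}$-module basis, not a small set of algebra generators: if you expand the products $\omega_i\omega_j$ in the full reduced basis, the number of admissible tables is of order $\prod_{i,j,k}\bigl(1+\lambda_i\lambda_j/\lambda_k\bigr)\approx D_L^{\,cn^2}$, which is catastrophically worse than Schmidt's bound; if instead you keep only a few small generators, they do not span $\mathcal{O}_L$ as a module and there are no structure constants to record. The trade-off you describe --- ``few generators versus large generators'' --- does not by itself escape this dichotomy, and you have not identified the combinatorial input that actually produces the exponent $\exp(C\sqrt{\log n})$.

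That input, in Ellenberg--Venkatesh, is a statement about \emph{degrees of invariants}, not about ring structure: the field $L=K(\alpha_1,\dots,\alpha_r)$ is pinned down (up to boundedly many choices) by the values of multi-symmetric $S_n$-invariants of the tuple of archimedean embeddings, and the theorem rests on exhibiting enough algebraically independent such invariants whose degrees are small --- roughly, one needs $\binom{r+d}{r}\gtrsim n$ monomial exponent patterns of total degree at most $d$, together with a careful treatment of proper subfields (the same issue that forces the parameter $t$ in Theorem \ref{thm:Permreps}) and of the generic finiteness of the resulting map to affine space. The count is then over the integer values of these invariants, each bounded by a power of the archimedean sizes, and the optimization in $r$ and $d$ is what yields $\exp(C\sqrt{\log n})$; your ``saddle point of order $\sqrt{\log n}$'' is asserted but not derived from any quantitative lemma. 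To repair your argument you would need either to replace the multiplication-table data by low-degree invariant data (recovering the Ellenberg--Venkatesh route, and the route of Section \ref{sec:Proof-of-permreps} of this paper), or to prove a genuinely new lemma showing that a rank-$n$ order is determined by polynomially little data attached to $O(\sqrt{\log n})$-many small elements; the latter is precisely the hard content and is missing.
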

Although the constants are not explicitly computed in the paper, after
some effort one can show that for sufficiently large $n$ (roughly
on the order of $n=20$), the result becomes stronger than Schmidt's
bound.

By taking logarithms, one may recast Theorem \ref{thm:(Ellenberg-Venkatesh)}
as showing that \[
\limsup_{X\to\infty}\dfrac{\log\, N_{K,n}(X)}{\log\, X}\ll n^{\epsilon}\]
for any $\epsilon>0$. For comparison, Schmidt's result is that this limit is at most $\dfrac{n+2}{4}$, while Linnik's conjecture is that this limit is 1.

Ellenberg-Venkatesh use a modification of Schmidt's technique: rather
than counting the number of possibilities for a single element of
$\mathcal{O}_{L}$, they instead count linearly-independent $r$-tuples
of elements of $\mathcal{O}_{L}$, where $r$ is chosen at the end
so as to optimize the resulting bound. Then by using properties of
the invariant theory of products of symmetric groups, they rephrase
the problem into one about counting integral points on a scheme which
is a generically-finite cover of affine space.

\subsection{Counting Extensions with Specified Galois Closure}

We may refine the basic counting problem by restricting our attention
to extensions $L/K$ whose Galois closure $\hat{L}/K$ has Galois
group isomorphic to a particular finite permutation group $G$.
\begin{defn}
For fixed $K$ and $n$, and a transitive permutation group $G\hookrightarrow S_{n}$
with a given embedding into $S_{n}$, we define $N_{K,n}(X;G)$ to
be the number of number fields $L$ (up to $K$-isomorphism) such
that \end{defn}
\begin{enumerate}
\item The degree $[L:K]=n$,
\item The absolute norm of the relative discriminant $\mathrm{Nm}_{K/\mathbb{Q}}(\mathcal{D}_{L/K})$
is less than $X$, and 
\item The action of the Galois group of the Galois closure of $L/K$ on
the complex embeddings of $L$ is permutation-isomorphic to $G$.
\end{enumerate}
Extensions satisfying these conditions are referred to
as $G$-extensions. It is also common to abuse terminology and
refer to $G$ as the ``Galois group'' of the extension $L/K$,
despite the fact that this extension is not typically Galois.

A series of conjectures of Malle \cite{Malle-1,Malle-2} give expected
growth rates for $N_{K,n}(X;G)$ depending on the group $G$. Explicitly,
for $G$ a transitive subgroup acting on $\Omega=\left\{ 1,2,\dots,n\right\} $,
and for $g$ in $G$, define the index of an element \[
\mathrm{ind}(g)=n-\left[\text{number of orbits of }g\text{ on }\Omega\right],\]
which is also equal to the sum, over all cycles in the cycle decomposition of $g$ in $S_{n}$, of the length of the cycle minus 1.
Next define the index of $G$ to be \[
\mathrm{ind}(G)=\min\left\{ \mathrm{ind}(g)\,:\,1\neq g\in G\right\} .\]
We also set \[
a(G)=1/\mathrm{ind}(G).\]
Note that the index of a transposition is equal to 1, and (since an
element with index 1 has $n-1$ orbits) the transpositions are the
only elements of index 1.

The absolute Galois group of $K$ acts on the conjugacy classes of
$G$ via the action on $\bar{\mathbb{Q}}$-characters of $G$. We
define the orbits (of that action) to be the {}``$K$-conjugacy classes''
of $G$. Since all elements in a $K$-conjugacy class have the same
index, we define the index of a conjugacy class to be the index of
any element in that class.

The strong form of Malle's conjecture is as follows:
\begin{conjecture}
(Malle, strong form) \label{con:(Malle,-strong-form)}There exists
a constant $c(k,G)>0$ such that \[
N_{K,n}(X;G)\sim c(K,G)\cdot X^{a(G)}\cdot\log(X)^{b(K,G)-1},\]
where $a(G)=\dfrac{1}{\mathrm{ind}(G)}$ and $b(K,G)=\#\left\{ C:\, C\text{ a }K\text{-conjugacy class of minimal index ind}(G)\right\} $. \end{conjecture}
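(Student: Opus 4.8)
The plan is to attack the asymptotic through its associated Dirichlet series. Set
\[
D(s) \;=\; \sum_{L} \mathrm{Nm}_{K/\mathbb{Q}}(\mathcal{D}_{L/K})^{-s},
\]
where $L$ ranges over exactly the $G$-extensions counted by $N_{K,n}(X;G)$. By a Tauberian theorem for Dirichlet series with nonnegative coefficients (Delange's, or Landau's), the claimed formula $N_{K,n}(X;G)\sim c(K,G)\,X^{a(G)}(\log X)^{b(K,G)-1}$ is equivalent to the purely analytic assertion that $D(s)$ continues meromorphically past its abscissa of convergence $\mathrm{Re}(s)=a(G)$, with a single singularity on that line located at $s=a(G)$ and of order exactly $b(K,G)$, having positive leading coefficient. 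The whole problem is thereby reduced to pinning down the leading singularity of $D(s)$.

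First I would translate the count into one over continuous homomorphisms $\rho\colon\mathrm{Gal}(\overline{K}/K)\to G$, taken up to conjugacy. Fixing the point-stabilizer $H=\mathrm{Stab}(1)\le G$ of the degree-$n$ action, the field $L=\overline{K}^{\,\rho^{-1}(H)}$ is the associated degree-$n$ extension, and the requirement that its Galois closure have group permutation-isomorphic to $G$ becomes a surjectivity (core-triviality) condition on $\rho$. The relative discriminant then factors over the primes $\mathfrak{p}$ of $K$, with the exponent of $\mathfrak{p}$ in $\mathcal{D}_{L/K}$ determined entirely by the restriction $\rho_{\mathfrak{p}}$ to a decomposition group; for a tamely ramified $\mathfrak{p}$ whose inertia image is generated by $g\in G$, that exponent is exactly $\mathrm{ind}(g)$. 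This is the mechanism that feeds the group-theoretic index into the discriminant, and it is the source of both the leading exponent $a(G)=1/\mathrm{ind}(G)$ and the logarithmic power.

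The core computation is then to factor $D(s)$ into local factors. Writing $q_{\mathfrak{p}}=\mathrm{Nm}_{K/\mathbb{Q}}(\mathfrak{p})$, one expects
\[
D(s)\;\approx\;\prod_{\mathfrak{p}}D_{\mathfrak{p}}(s),\qquad D_{\mathfrak{p}}(s)=\sum_{\rho_{\mathfrak{p}}}q_{\mathfrak{p}}^{-s\,e(\rho_{\mathfrak{p}})},
\]
with $e(\rho_{\mathfrak{p}})$ the local discriminant exponent and the sum over admissible local homomorphisms. The dominant tame contribution at each $\mathfrak{p}$ comes from inertia generators of minimal index $\mathrm{ind}(G)$, so $D_{\mathfrak{p}}(s)=1+m_{\mathfrak{p}}\,q_{\mathfrak{p}}^{-s\,\mathrm{ind}(G)}+\cdots$, where $m_{\mathfrak{p}}$ counts the minimal-index inertia classes realizable over $K_{\mathfrak{p}}$. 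Averaging $m_{\mathfrak{p}}$ over $\mathfrak{p}$ via Chebotarev's theorem — here the Galois action on conjugacy classes enters, since a minimal class is realizable over $K_{\mathfrak{p}}$ for a positive density of $\mathfrak{p}$ precisely when it lies in a $K$-conjugacy class — shows that the mean value of $m_{\mathfrak{p}}$ equals $b(K,G)$. Consequently $\prod_{\mathfrak{p}}D_{\mathfrak{p}}(s)$ behaves like $\zeta_{K}(\mathrm{ind}(G)\,s)^{b(K,G)}$ near $s=a(G)$, and since $\zeta_{K}$ has a simple pole at $1$ this produces a pole of order exactly $b(K,G)$ at $s=a(G)$ with positive leading coefficient, matching the conjectured $c(K,G)$ and log-power.

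The genuine difficulty — and the reason the statement remains a conjecture — is to justify the two approximations above rigorously and uniformly for general non-abelian $G$. One must show that (i) the non-minimal tame primes and all wildly ramified primes contribute only factors holomorphic in a half-plane strictly left of $a(G)$, so that they neither shift nor inflate the leading pole, and (ii) the inclusion–exclusion enforcing global surjectivity does not disturb the singularity. Both demand that the local conditions defining $G$-extensions be sufficiently independent across primes — an equidistribution statement for the family that is presently available only in special cases (abelian $G$ by class field theory, and low-degree $G$ by prehomogeneous vector spaces, as in the works cited in Section \ref{sec:Notation-and-Background}). I expect step (i) to be the decisive obstacle: the geometry-of-numbers estimates available here bound the coefficients only from above and are far too lossy to fix the \emph{exact} order of the pole, so a sharper harmonic-analytic or parametric input is required to obtain the matching lower bound and the precise constant $c(K,G)$.
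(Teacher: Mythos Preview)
The statement you are trying to prove is labeled in the paper as a \emph{conjecture}, not a theorem, and the paper offers no proof of it whatsoever. More to the point, the paper explicitly records that the strong form of Malle's conjecture is \emph{false} as stated: Kl\"uners showed that for $G=C_{3}\wr C_{2}\hookrightarrow S_{6}$ (and more generally $C_{p}\wr C_{2}$) the exponent of $\log X$ is strictly larger than $b(K,G)-1$. So there is no ``paper's own proof'' to compare against, and no correct proof of the statement can exist in the generality claimed.

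Your heuristic is essentially the one that motivated Malle's conjecture in the first place, and you have correctly identified where the argument is soft. But the failure is not merely that steps (i) and (ii) are hard to justify: in the Kl\"uners examples the Euler-product heuristic actually gives the \emph{wrong} order of pole. The issue is your assertion that the Chebotarev average of $m_{\mathfrak{p}}$ equals $b(K,G)$. When $G$ has a quotient through which a cyclotomic character factors (e.g.\ the quotient $C_{2}$ of $C_{3}\wr C_{2}$, with intermediate field $\mathbb{Q}(\zeta_{3})$), the local realizability of minimal-index inertia classes is biased by that intermediate cyclotomic extension, and the resulting Dirichlet series picks up an extra logarithmic factor not predicted by the naive count of $K$-conjugacy classes. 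T\"urkelli's refinement, mentioned in the paper, is designed precisely to correct this term. So the genuine gap is not that your analytic machinery is too weak to prove a true statement, but that the statement itself is false and your step ``averaging $m_{\mathfrak{p}}$ via Chebotarev gives $b(K,G)$'' is where the argument actually breaks.
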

\begin{rem*}
We would expect by Linnik's conjecture that for any
group $G$, the asymptotics should not exceed $X^{1}$, and indeed
it is not hard to see (cf. Lemma 2.2 of \cite{Malle-2}) that if $a(G)=1$
then $b(K,G)$ is also 1.
\end{rem*}
The strong form of Malle's conjecture holds for all abelian groups;
this is a result of Wright \cite{Wright}. However, Kl{\"u}ners \cite{Kluners}
has constructed a counterexample to the $\log(X)$ part of the conjecture
for the nonabelian group $G=C_{3}\wr C_{2}$ of order 18 embedded
in $S_{6}$. (Kl{\"u}ners also notes that this is not a unique example,
and that all groups of the form $C_{p}\wr C_{2}$ yield counterexamples
to Malle's conjecture as formulated above.) The ultimate difficulty
is the potential existence of an intermediate cyclotomic subfield
inside the extension: in this case, $\mathbb{Q}(\zeta_{3})$ (or $\mathbb{Q}(\zeta_{p})$
in the general family).

There is a recent refinement of the exponent of the log-term in Malle's
conjecture over function fields, due to T{\"u}rkelli \cite{Turkelli},
which appears to avoid all of the known counterexamples. T{\"u}rkelli's
refinement is motivated by counting points on components of non-connected
Hurwitz schemes. The question of counting points on connected Hurwitz
schemes was related to counting extensions of function fields in a
paper of Ellenberg-Venkatesh \cite{EV-functionfields}, and their
heuristics (subject to some assumptions) aligned with Malle's. T{\"u}rkelli
extended their arguments to cover non-connected Hurwitz schemes, and
the difference in the results compared to those of Ellenberg-Venkatesh
suggested a modification to Malle's conjecture.

It is generally believed that the power of $X$ in Malle's conjecture
is essentially correct. Explicitly: 
\begin{conjecture}
(Malle, weak form) \label{con:(Malle,weak)} For any $\epsilon>0$
and any number field $K$, $X^{a(G)}\ll N_{K,n}(X;G)\ll X^{a(G)+\epsilon}$,
where $a(G)=\dfrac{1}{\mathrm{ind}(G)}$.
\end{conjecture}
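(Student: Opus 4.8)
The plan is to treat the two inequalities separately, since they rest on genuinely different techniques: the lower bound $X^{a(G)}\ll N_{K,n}(X;G)$ is accessible by direct construction, whereas essentially all of the difficulty (and the reason the conjecture is open) lives in the upper bound.

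For the lower bound, I would exhibit a family of $G$-extensions whose discriminants grow in a controlled way. Fix an element $g\in G$ realizing the minimum $\mathrm{ind}(G)$, and produce extensions ramifying tamely at a large collection of primes $\mathfrak{p}$ of $K$ whose inertia is generated by $g$; at each such prime the conductor-discriminant formula gives local contribution $\mathrm{Nm}(\mathfrak{p})^{\mathrm{ind}(g)}$ to $\mathrm{Nm}_{K/\mathbb{Q}}(\mathcal{D}_{L/K})$. Since $\prod\mathrm{Nm}(\mathfrak{p})^{\mathrm{ind}(G)}<X$ is equivalent to $\prod\mathrm{Nm}(\mathfrak{p})<X^{1/\mathrm{ind}(G)}=X^{a(G)}$, a Tauberian count of square-free products of such primes, combined with Chebotarev density, produces on the order of $X^{a(G)}$ extensions, matching $a(G)$. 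The technical input is an embedding-problem/local-gluing argument guaranteeing that enough prescribed local behaviors assemble into a genuine global $G$-extension, together with a check that the resulting fields are pairwise non-$K$-isomorphic.

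For the upper bound $N_{K,n}(X;G)\ll X^{a(G)+\epsilon}$ I would follow the Ellenberg--Venkatesh strategy, specialized to the permutation group $G\hookrightarrow S_n$. Given $L/K$ with $\mathrm{Nm}(\mathcal{D}_{L/K})<X$, apply Minkowski's theorem to the lattice $\mathcal{O}_L$ to extract an $r$-tuple $(\alpha_1,\dots,\alpha_r)$ of elements of small archimedean size, linearly independent over $\mathcal{O}_K$. Evaluating the $G$-invariant polynomials reviewed in Section \ref{sec:Appendix-Invariants} on the Galois conjugates of this tuple yields a point with bounded integer coordinates on the quotient variety $\mathrm{Spec}(\mathbb{Z}[x_{ij}]^{G})$, where $G$ permutes the conjugate index; distinct fields give distinct points up to bounded fibers, so the count reduces to estimating integral points of bounded height on this affine variety. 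The exponent obtained is governed by the dimension of the image divided by the degrees of a generating set of invariants, and the appearance of $\mathrm{ind}(G)$ should come precisely from the fact that the lowest-degree non-constant relative invariants separating $G$ from its overgroups are controlled by the minimal index.

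The hard part is forcing the exponent all the way down to $a(G)+\epsilon$. The naive point count, as in Schmidt and in the unoptimized Ellenberg--Venkatesh bound, produces an exponent far larger than $1/\mathrm{ind}(G)$; extracting the sharp value requires an optimal choice of the rank $r$, balancing the supply of Minkowski tuples against the invariant degrees, together with a delicate argument that the controlling invariants may be taken of degree comparable to $\mathrm{ind}(G)$, so that bounding their values by $X$ yields only $X^{a(G)}$ admissible points. Controlling the fiber multiplicity and absorbing the $\epsilon$-loss coming from units, from the choice of $\alpha_i$ within their lattice, and above all from imprimitive $G$ with large proper subfields --- exactly the intermediate-cyclotomic phenomenon behind Kl\"uners's counterexamples to the strong form --- is where I expect the argument to resist a general proof, which is consistent with the weak conjecture remaining open.
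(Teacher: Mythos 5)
The statement you are addressing is a \emph{conjecture}, not a theorem: the paper states it without proof and explicitly remarks that even this weak form ``would imply that every finite group is a Galois group over $\mathbb{Q}$'' and is ``naturally considered to be entirely out of reach of current methods.'' So there is no proof in the paper to compare against, and your proposal is a research program rather than an argument; both halves of it hinge on steps that are themselves open problems.

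For the lower bound, the ``embedding-problem/local-gluing argument guaranteeing that enough prescribed local behaviors assemble into a genuine global $G$-extension'' is precisely the inverse Galois problem with prescribed local conditions. For general $G$ it is not known that even one $G$-extension of $K$ exists, let alone $\gg X^{a(G)}$ of them with controlled ramification; this step is available for abelian, nilpotent, and certain solvable or wreath-product groups (which is exactly why the known cases listed in the paper are what they are), but not in general. For the upper bound, the Schmidt/Ellenberg--Venkatesh geometry-of-numbers machinery that the paper actually develops (Theorem \ref{thm:Permreps}) provably cannot descend to exponent $a(G)+\epsilon$: the exponent it produces is $\tfrac{1}{2(n-t)}\bigl[\sum_{i=1}^{n-1}\deg(f_{i+1})-\tfrac{1}{[K:\mathbb{Q}]}\bigr]$, which stays on the order of $1$ or larger because the product of the primary invariant degrees is divisible by $|G|$, whereas $a(G)=1/\mathrm{ind}(G)$ is at most $1$ and typically much smaller (compare the paper's $X^{11/6}$ for $PSL_2(\mathbb{F}_7)\subset S_7$ against Malle's predicted $X^{1/2}$). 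Your key assertion that ``the controlling invariants may be taken of degree comparable to $\mathrm{ind}(G)$'' has no known justification: the degrees of primary invariants are governed by the group order and the permutation degree, not by the minimal index of a nonidentity element, and no choice of the tuple length $r$ in the Ellenberg--Venkatesh refinement closes that gap. Your final sentence concedes the difficulty; the honest conclusion is that neither inequality is within reach by the methods you describe.
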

If true, Malle's conjecture, even when we restrict to the {}``weak
form'' that only considers the power of $X$, and only for extensions
of $\mathbb{Q}$, would for example imply that every finite group
is a Galois group over $\mathbb{Q}$. As such, even this weak version
(let alone the full version) is naturally considered to be entirely
out of reach of current methods.

An upper bound at least as strong as that in Conjecture \ref{con:(Malle,weak)}
is known to hold in the following cases over general number fields
$K$:
\begin{enumerate}
\item For any abelian group \cite{Maki, Wright}, with the asymptotic constants
(in principle).
\item For any nilpotent group \cite{Kluners-Malle}. For a nilpotent group
in its regular representation, the lower bound is also known.
\item For $S_{3}$ \cite{Datskovsky-Wright,Davenport-Heilbronn}, with the
asymptotic constants. In fact, in this case there is a second main
term, and its asymptotic constant is also known \cite{Bhargava-Shankar-Tsimerman,Taniguchi-Thorne}.
\item For $D_{4}$ and $S_{4}$ \cite{Baily,Bhargava-3,Cohen-1,Bhargava-Shankar-Wang}.
The asymptotic constants are also known.  A power savings in the error term is also known \cite{Belabas-Bhargava-Pomerance} when $K=\mathbb{Q}$.
\item For $S_{5}$  \cite{Kable-Yukie,Bhargava-4, Bhargava-Shankar-Wang},
as well as the asymptotic constant.  A power savings in the error term is also known \cite{Shankar-Tsimerman} when $K=\mathbb{Q}$.
\item For degree-6 $S_{3}$ extensions \cite{Bhargava-Wood}, as well as
the asymptotic constant.
\item Under mild restrictions, for wreath products of the form $C_{2}\wr H$
where $H$ is nilpotent \cite{Kluners-2}.
\end{enumerate}
Note that the results in degree 4 provide a stark contrast for the
situation with counting polynomials by the maximum height of their
coefficients: if we let $a_{i}$ for $1\leq i\leq n$ be indeterminates,
then the polynomial $p(x)=x^{n}+a_{n-1}x^{n-1}+\cdots+a_{0}\in K(a_{1},\cdots,a_{n})$
has Galois group $S_{n}$ over $K(a_{1},\dots,a_{n})$. Hilbert's
Irreducibility Theorem then implies that almost all specializations
(when ordered by the coefficient height) of this polynomial still
have Galois group $S_{n}$.

However, the results of Cohen et al. \cite{Cohen-2} collectively show that, when
ordered by discriminant, a positive proportion (roughly 17\%) of extensions
of degree 4 have an associated Galois group isomorphic to the dihedral
group $D_{4}$: the difference is entirely caused by ordering the
fields by discriminant. Malle's conjectures, moreover, indicate that
the non-$S_{n}$ extensions should have a positive density for any
composite $n$, but should have zero density for prime $n$, though
this is not known to be true for any $n>5$.

\subsection{Outline of Results}

The overarching goal of this paper is to generalize the results of
Schmidt and Ellenberg-Venkatesh to arbitrary $G$-extensions. In Section
\ref{sec:Proof-of-permreps}, we prove the following theorem:

\begin{thmcust}{\ref{thm:Permreps}} Let $n\geq2$, let $K$ be
any number field, and let $G$ be a proper transitive subgroup of
$S_{n}$. Also, let $t$ be such that if $G'$ is the intersection
of any point stabilizer in $S_{n}$ with $G$, then any subgroup of
$G$ properly containing $G'$ has index at most $t$. Then for any
$\epsilon>0$, \[
N_{K,n}(X;G)\ll X^{\tfrac{1}{2(n-t)}\left[\sum_{i=1}^{n-1}\deg(f_{i+1})-\tfrac{1}{[K:\mathbb{Q}]}\right]+\epsilon},\]
where the $f_{i}$ for $1\leq i\leq n$ are a set of primary invariants
for $G$, whose degrees (in particular) satisfy $\deg(f_{i})\leq i$.
\end{thmcust}

We note here that for every primitive group covered by the Theorem,
the result is always strictly better than the result offered by Schmidt's
bound $N_{K,n}(X)\ll X^{(n+2)/4}$, and the savings
(see Appendix \ref{sec:Tabulation-of-Results}) are often significant.

Our proof follows the same general approach as that of Schmidt and
generalizes Example 2.7 from Ellenberg-Venkatesh \cite{EV-numberfields},
which gives a rough outline of the technique for a single group. The
technique is as follows:
\begin{enumerate}
\item Apply Minkowski's Theorems to obtain an algebraic integer generating
$L$ whose archimedean valuations are small.
\item Use a counting argument to establish an upper bound on the number
of such algebraic integers.
\end{enumerate}
The goal of Proposition \ref{lem:trace-zero} is to accomplish (1).
We modify the basic argument in (2) by rephrasing the counting argument
in scheme-theoretic language, and then invoke invariant theory and
the large sieve (see Lemma \ref{lem:Sieving-saving}) to save in the
counting part.

\section{Polynomial Invariants of Finite Groups\label{sec:Appendix-Invariants}}

In this section we briefly discuss some standard results in the theory
of polynomial invariants; we freely refer to results from this section
in the main text. The following discussion is condensed from Derksen-Kemper
\cite{Derksen-Kemper}.

Let $G$ be a finite group and $\rho:G\to GL_{n}(\mathbb{C})$ be
a (faithful) complex representation, and let $G$ act on $\mathbb{C}[x_{1},\cdots,x_{n}]$
via $\rho$. If $f_{1},\cdots,f_{n}$ are algebraically independent,
homogeneous elements of $\mathbb{C}[x_{1},\cdots,x_{n}]$ with the
property that $\mathbb{C}[x_{1},\cdots,x_{n}]^{G}$, the ring of $G$-invariant
polynomials, is a finitely-generated module over $\mathbb{C}[f_{1},\cdots,f_{n}]$,
we say these polynomials $f_{i}$ are a set of ``primary invariants''
for $G$. The Noether normalization lemma implies that such polynomials
exist; that there are $n$ of them follows from comparing transcendence
degrees.

The primary invariants are not unique: one can (for example) take
linear combinations or powers of the $f_{i}$ and still retain the
finite-generation property. When we speak of primary invariants, we
generally mean a set of primary invariants which are homogeneous and
of minimal degree, arranged in nondecreasing order by degree. However,
all results discussed will hold for any set of primary invariants.

Denote $A=\mathbb{C}[f_{1},\cdots,f_{n}]$, and $R=\mathbb{C}[x_{1},\cdots,x_{n}]^{G}$.
The theorem of Hochster-Roberts (Theorem 2.5.5 of \cite{Derksen-Kemper})
implies that $R$ is a Cohen-Macaulay ring and, moreover, that there
exist homogeneous $G$-invariant polynomials $g_{1},\, g_{2},\cdots,\, g_{k}$
with $g_{1}=1$ such that $R=A\cdot g_{1}+\cdots+A\cdot g_{k}$. These
polynomials $g_{i}$ are called ``secondary invariants'' of
$G$ and will depend intrinsically on the choice of primary invariants,
and are not uniquely determined even for a fixed set of primary invariants.
\begin{example*}
Let $G=S_{n}$ and $\rho$ be the regular representation of $G$ (which
acts by index permutation on $\mathbb{C}[x_{1},\cdots,x_{n}]$). It
is easy to see that the elementary symmetric polynomials are invariants
under the action of $G$ on $\mathbb{C}[x_{1},\cdots,x_{n}]$, and
that they are algebraically independent: thus, they form a set of
primary invariants for $G$. In fact, for \emph{any} subgroup of $S_{n}$,
the elementary symmetric polynomials form a set of (possibly non-minimal-degree)
primary invariants: hence, for any permutation representation $\rho$
of degree $n$, there exists a set of primary invariants of $\rho$
such that $\deg(f_{i})\leq i$ for each $1\leq i\leq n$.
\end{example*}
Associated to any (usually $G$-invariant) graded submodule $M$ of
$\mathbb{C}[x_{1},\cdots,x_{n}]$ is the generating function $H(M,t)={\displaystyle \sum_{j=0}^{\infty}a_{j}t^{j}}$,
where $a_{j}=\dim_{\mathbb{C}}(M^{(j)})$, the vector space dimension
of the degree-$j$ polynomials in $M$. This generating function is
called (variously) the Hilbert series or the Molien
series of $M$.
\begin{example*}
For $A=\mathbb{C}[f_{1},\cdots,f_{d}]$, one has $H(A,t)={\displaystyle \prod_{i=1}^{n}(1-t^{\deg(f_{i})})^{-1}}$
by the algebraic independence of the $f_{i}$.
\end{example*}
For $R=\mathbb{C}[x_{1},\cdots,x_{m}]^{G}$, there is a formula, due
to Molien, which says \begin{equation}
H(R,t)=\dfrac{1}{\left|G\right|}{\displaystyle \sum_{g\in G}\dfrac{1}{\det(I-t\rho(g))}}.\label{eq:Molien-1}\end{equation}
(In fact the formula applies to any linear representation $\rho\,:\, G\to GL(V)$,
over any field of characteristic relatively prime to $\left|G\right|$.)
By looking at the free resolution of $R=A\cdot g_{1}+\cdots+A\cdot g_{k}$
arising from the secondary invariants in tandem with \ref{eq:Molien-1},
we can write \begin{equation}
H(R,t)=\dfrac{1}{\left|G\right|}{\displaystyle \sum_{g\in G}\dfrac{1}{\det(I-t\rho(g))}}=\dfrac{\sum_{j=1}^{k}t^{\deg(g_{i})}}{\prod_{i=1}^{n}(1-t^{\deg(f_{i})})}.\label{eq:Molien}\end{equation}

By examining the Hilbert series identity \ref{eq:Molien} with sufficient
care, one can deduce a number of facts about the primary invariants:
for example, the product of the degrees of any set of primary invariants
is divisible by $\left|G\right|$, and the quotient is equal to the
number of associated secondary invariants (cf. Proposition 3.3.5 of
\cite{Derksen-Kemper}). Also, the least common multiple of the degrees
of the primary invariants is divisible by the exponent of $G$.

For any particular representation $\rho$, one can compute the Hilbert
series as a rational function using Molien's formula, and then factor
the denominator to generate possibilities for the degrees for the
primary invariants. One might hope that this will immediately give
the degrees of the primary invariants, but this is not the case: for
general linear representations (or even permutation representations),
the minimal degrees possible from the Hilbert series will not always
give the degrees of an actual set of primary invariants.

The computer algebra system MAGMA computes minimal primary invariants
by using Molien's formula to generate possible degree vectors for
the primary invariants, then generates independent $\rho$-invariant
polynomials of those degrees, and finally applies a Hilbert-driven
Buchberger algorithm to verify that the resulting ideal is zero-dimensional.
For a number of reasons, most algorithms for primary invariant computation
in general settings generally seek to minimize the product of the
invariant degrees rather than their sum. In general, we would also
not expect there to be a way to compute the degrees of a set of primary
invariants without essentially having to compute the invariants themselves;
see the discussion following Algorithm 3.3.4 of \cite{Derksen-Kemper}
for further details.

\section{Proof of Main Counting Theorem \label{sec:Proof-of-permreps}}

Given an extension $L/K$, we start by constructing a generator of
small size.
\begin{prop}
\label{lem:trace-zero} Let $K$ be a number field of degree $l$
over $\mathbb{Q}$, and $L/K$ an extension of degree $n$ such that
$\mathrm{Nm}_{K/\mathbb{Q}}(\mathcal{D}_{L/K})<X$, and such that
any proper subfield $K'$ of $L$ containing $K$ has $[K':K]\leq t$.
Then there exists an $\alpha\in\mathcal{O}_{L}$ with $\mathrm{Tr}_{L/K}(\alpha)=0$,
all of whose archimedean valuations have absolute value $\ll X^{\tfrac{1}{2l(n-t)}}$,
and such that $L=K(\alpha)$.\end{prop}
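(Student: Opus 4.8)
The plan is to produce $\alpha$ via the geometry of numbers applied to a well-chosen lattice inside $L\otimes_{\mathbb{Q}}\mathbb{R}$, then to arrange both the trace-zero condition and the primitivity ($L=K(\alpha)$) by a pigeonhole/counting argument on the proper subfields.

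First I would set up the archimedean picture. The ring $\mathcal{O}_L$ embeds as a full-rank lattice in the Minkowski space $V=L\otimes_{\mathbb{Q}}\mathbb{R}\cong\mathbb{R}^{r_1}\times\mathbb{C}^{r_2}$ (here of total real dimension $ln$), with covolume controlled by the absolute discriminant $D_L$. By the tower formula $D_L=\mathrm{Nm}_{K/\mathbb{Q}}(\mathcal{D}_{L/K})\cdot D_K^{n}$, the hypothesis $\mathrm{Nm}_{K/\mathbb{Q}}(\mathcal{D}_{L/K})<X$ gives $D_L\ll X\cdot D_K^{n}$, so $\mathrm{covol}(\mathcal{O}_L)\ll (X\,D_K^{n})^{1/2}$. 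The subtlety is that I do not want an arbitrary short vector: I want one killing the relative trace. So I would instead work inside the sublattice $\Lambda=\{\beta\in\mathcal{O}_L:\mathrm{Tr}_{L/K}(\beta)=0\}$, which is the kernel of the surjection $\mathrm{Tr}_{L/K}:\mathcal{O}_L\to\mathfrak{a}$ onto a fractional ideal of $\mathcal{O}_K$; this is a lattice of rank $l(n-1)$ sitting inside the trace-zero subspace $V_0\subset V$. Alternatively one can translate $\mathcal{O}_L$ by $-\tfrac{1}{n}\mathrm{Tr}_{L/K}$ to land in $V_0$, but restricting to $\Lambda$ is cleaner for obtaining an honest algebraic integer with $\mathrm{Tr}_{L/K}(\alpha)=0$.

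Next I would apply Minkowski's convex body theorem (or its corollary giving a short nonzero vector) to $\Lambda$ inside $V_0$. The covolume of $\Lambda$ is comparable to $\mathrm{covol}(\mathcal{O}_L)$ up to factors depending only on $K$ and $n$, so Minkowski yields a nonzero $\alpha\in\Lambda$ all of whose archimedean absolute values are $\ll \mathrm{covol}(\Lambda)^{1/\dim V_0}\ll (X\,D_K^{n})^{1/(2l(n-1))}$. Since $\dim_{\mathbb{R}}V_0=l(n-1)$, the exponent coming out naturally is $\tfrac{1}{2l(n-1)}$, whereas the target is $\tfrac{1}{2l(n-t)}$, which is weaker (larger), so there is room to spare; the slack is exactly what lets me discard non-primitive candidates. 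The point is that I will not take a single short vector but a whole cube of short lattice vectors and count.

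The hard part will be guaranteeing primitivity, i.e.\ $L=K(\alpha)$, and this is where the parameter $t$ enters. If $\alpha$ fails to generate, it lies in some proper subfield $K\subsetneq K'\subsetneq L$ with $[K':K]\le t$ by hypothesis. The idea is to count: the number of $\alpha\in\Lambda$ with all archimedean sizes $\le B$ is $\asymp B^{\,l(n-1)}/\mathrm{covol}(\Lambda)$ (for $B$ above the covolume scale), while those $\alpha$ lying in a fixed intermediate $K'$ with $\mathrm{Tr}_{L/K}(\alpha)=0$ form a lattice of strictly smaller rank $l([K':K]-1)\le l(t-1)$ inside $V_0$, hence contribute only $\ll B^{\,l(t-1)}$ such points. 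Summing over the finitely many (bounded in number by $n$) intermediate fields $K'$, the non-primitive contribution is $\ll B^{\,l(t-1)}$. I would then choose $B\asymp X^{1/(2l(n-t))}$ (times the $D_K$ factors), large enough that the main term $B^{\,l(n-1)}/\mathrm{covol}(\Lambda)$ strictly dominates the error $B^{\,l(t-1)}$ — a comparison of exponents $l(n-1)$ against $l(t-1)$ after accounting for the covolume bound $\mathrm{covol}(\Lambda)\ll (X\,D_K^{n})^{1/2}$ — forcing the existence of at least one primitive $\alpha$ within the box of the asserted size. One cosmetic issue to handle is that the number of intermediate fields must be bounded independently of $L$; this follows since each such $K'$ corresponds to a subgroup of $G$ lying strictly between a point stabilizer and $G$, and there are boundedly many of those. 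The exponent bookkeeping in this final comparison, and confirming that the chosen $B$ indeed lands at $X^{1/(2l(n-t))}$ rather than the naive $X^{1/(2l(n-1))}$, is the step I would expect to require the most care.
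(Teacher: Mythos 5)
Your argument is sound, but it is a genuinely different route from the paper's, so a comparison is in order. The paper works with the full Minkowski lattice $\varphi_{L}(\mathcal{O}_{L})$ and Minkowski's \emph{second} theorem: the bound $\prod_{i}\left|\left|\beta_{i}\right|\right|\ll D_{L}^{1/2}$ on the successive minima forces the first $t+1$ of them to satisfy $\left|\left|\beta_{k}\right|\right|\ll D_{L}^{1/(2l(n-t))}$; since these cannot all lie in a proper intermediate field, a pigeonhole over integer combinations with coefficients bounded by the number of subfields produces a generator $\alpha_{1}$ of the right size, and the trace is killed only at the very end by subtracting off the $\mathcal{O}_{K}$-part of (an integer multiple of) $\alpha_{1}$. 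You instead impose the trace-zero condition from the outset by passing to the rank-$l(n-1)$ sublattice $\Lambda=\{\beta\in\mathcal{O}_{L}:\mathrm{Tr}_{L/K}(\beta)=0\}$, and you obtain primitivity by counting rather than by construction: points of $\Lambda$ in the box versus points of the lower-rank sublattices coming from intermediate fields. Your version is cleaner on the trace condition and sidesteps the claim that the first few successive minima generate $L/K$; the price is that you need a \emph{lower} bound on lattice-point counts (van der Corput's quantitative form of Minkowski's convex body theorem, valid once the box volume exceeds $2^{\dim}\cdot\mathrm{covol}(\Lambda)$) together with the observation that every nonzero algebraic integer has some archimedean valuation of absolute value at least $1$, so that a rank-$d'$ subfield sublattice contributes $\ll B^{d'}$ points. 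The one step you flagged does deserve emphasis: at $B=C\,X^{1/(2l(n-t))}$ the main term $B^{l(n-1)}/\mathrm{covol}(\Lambda)\gg C^{l(n-1)}X^{(t-1)/(2(n-t))}$ and the error $\ll C^{l(t-1)}X^{(t-1)/(2(n-t))}$ carry the \emph{same} power of $X$, so the main term does not dominate in the exponent; you win only by choosing the constant $C$ large, using $l(n-1)>l(t-1)$, i.e.\ $n>t$. That does close the argument (the proposition only asserts a bound up to an implied constant), but it should be stated rather than left as ``strictly dominates.''
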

\begin{proof}
If $L$ has $r$ real embeddings $\rho_{1},\,\dots,\,\rho_{r}$ and
$s$ complex embeddings $\sigma_{1},\bar{\sigma}_{1},\,\dots,\,\sigma_{s},\bar{\sigma}_{s}$
(where $r+2s=nl$), for $\alpha\in L$ we define the {}``Minkowski
map'' $\varphi_{L}:L\to\mathbb{R}^{nl}=\mathbb{R}^{r+2s}$ sending
\[
\alpha\mapsto\left(\rho_{1}(\alpha),\,\dots,\rho_{r}(\alpha),\,\sqrt{2}\,\mathrm{Re}\,\sigma_{1}(\alpha),\,\sqrt{2}\,\mathrm{Im}\,\sigma_{1}(\alpha),\,\dots,\,\sqrt{2}\,\mathrm{Re}\,\sigma_{s}(\alpha),\,\sqrt{2}\,\mathrm{Im}\,\sigma_{s}(\alpha)\right).\]
Recall that the image $\Lambda_{L}=\varphi_{L}(\mathcal{O}_{L})$
is the so-called Minkowski lattice of rank $nl$ in $\mathbb{R}^{nl}$.

Let $\beta_{1},\cdots,\beta_{nl}$ be the successive minima of the
gauge function $f(x_{1},\cdots,x_{nl})=\max(x_{1},\dots,x_{nl})$
on $\Lambda_{L}$, and denote $f(\varphi(\beta_{i}))=\left|\left|\beta_{i}\right|\right|$
for shorthand. (Note that $\left|\left|\beta_{i}\right|\right|$ is
essentially just the maximum archimedean valuation of $\beta_{i}$
up to a factor of 2.) Minkowski's Second Theorem \cite{Siegel} says
\begin{equation}
\prod_{i=1}^{nl}\left|\left|\beta_{i}\right|\right|\ll\left|D_{L}\right|^{1/2},\label{eq:Minkowski-bound}\end{equation}
where the implied constant depends only on $nl$.

Now since the $\beta_{i}$ are nondecreasing, for any $k$ we may
use the bound given by \ref{eq:Minkowski-bound} to write \[
\left|\left|\beta_{k}\right|\right|^{nl+1-k}\leq\prod_{i=k}^{nl}\left|\left|\beta_{i}\right|\right|\leq\prod_{i=1}^{nl}\left|\left|\beta_{i}\right|\right|\ll D_{L}^{1/2}\]
whence \begin{equation}
\left|\left|\beta_{k}\right|\right|\ll D_{L}^{1/2(nl+1-k)}.\label{eq:norm-bound-1}\end{equation}

For all $k$ with $1\leq k\leq t+1$, \ref{eq:norm-bound-1} implies
\begin{equation}
\left|\left|\beta_{k}\right|\right|\ll D_{L}^{1/2l(n-t)}\ll X^{1/2l(n-t)}.\label{eq:norm-bound}\end{equation}

Now, by our assumption about intermediate subfields, we know that
$S=\{\beta_{1},\cdots,\beta_{t+1}\}$ will generate $L/K$, since
$S$ spans a vector subspace of $L$ of dimension greater than any
proper subfield. By a pigeonhole argument, we see that if $\mathrm{sub}(L/K)$
denotes the number of subfields of $L/K$ (which by Galois theory
can be bounded above in terms of $n$ only), there exists a linear
combination $\alpha_{1}=\sum_{S}c_{i}\beta_{i}$, with integral coefficients
bounded in absolute value by $\mathrm{sub}(L/K)$, that generates
$L/K$.

Since $K$ is fixed, we may choose a basis $B$ of $\mathcal{O}_{K}$
and observe that $S'=S\cup B$ still has the property that $\left|\left|\beta\right|\right|\ll X^{1/2l(n-t)}$
for every $\beta\in S'$. If $\pi$ is the projection of $\varphi(\left\langle S'\right\rangle )$
onto the sublattice of the Minkowski lattice generated by $B$, then
$\alpha=l\alpha_{1}-\pi(\alpha_{1})$ lies in $\mathcal{O}_{L}$,
has trace zero, generates $L/K$, and its archimedean norms satisfy
\begin{equation}
\left|\left|\alpha\right|\right|\ll X^{1/2l(n-t)}.\label{eq:norm-bound-2}\end{equation}

\end{proof}
We also require a sieving lemma: 
\begin{lem}
\label{lem:Sieving-saving} Suppose $\Pi:Z\mapsto\mathbb{A}^{d}$
is a finite map of schemes of degree $\geq2$ and $Z$ is irreducible.
Then, for any $\epsilon>0$, the number of integral points of $Z$
whose images lie in the box centered at 0 whose sides have lengths
$(X^{a_{1}},X^{a_{2}},\cdots,X^{a_{d}})$ is $\ll X^{(\sum a_{i})-\tfrac{1}{2}a_{1}+\epsilon}$,
where $a_{1}\leq a_{2}\leq\cdots\leq a_{d}$ are positive rational
numbers.\end{lem}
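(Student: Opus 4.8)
The plan is to combine a geometric input that controls the reduction of $Z$ modulo primes with the large sieve, which converts that control into a power saving. Write $D\geq 2$ for the degree of $\Pi$. Because $\Pi$ is finite of degree $D$, each point of $\mathbb{A}^d(\mathbb{Z})$ has at most $D$ preimages in $Z(\mathbb{Z})$; hence, up to the harmless constant $D$, it suffices to bound the number of \emph{base points} $x\in\mathbb{Z}^d$ lying in the box $B$ with side lengths $X^{a_1},\dots,X^{a_d}$ centered at the origin that are hit by an integral point of $Z$. Call this set $\mathcal{A}$, and note that if $x\in\mathcal{A}$ then $x\bmod p$ lies in the image $\Pi(Z(\mathbb{F}_p))$ for every prime $p$ (reduce an integral lift). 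Thus $\mathcal{A}$ is constrained to avoid the complementary residues modulo every $p$, which is exactly the shape of hypothesis needed to feed the large sieve.

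The geometric heart of the argument is to show that these avoided residue sets are large for a positive-density set of primes, uniformly in $p$. Fix an integral model and discard the finitely many primes of bad reduction (where $Z_{\mathbb{F}_p}\to\mathbb{A}^d_{\mathbb{F}_p}$ fails to be finite of degree $D$ or étale on the relevant locus). Since $Z$ is irreducible and $\Pi$ has degree $\geq 2$, the function field $\mathbb{Q}(Z)$ is a nontrivial extension of $\mathbb{Q}(t_1,\dots,t_d)$, whose Galois (monodromy) group $W$ acts transitively on the $D\geq 2$ sheets of the cover. By Jordan's theorem a transitive group of degree $\geq 2$ contains a fixed-point-free element, so a positive proportion of conjugacy classes of $W$ act without fixed points. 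A point $x\in\mathbb{A}^d(\mathbb{F}_p)$ lies in $\Pi(Z(\mathbb{F}_p))$ only if the Frobenius class at $x$ has a fixed point on the sheets; by Chebotarev's theorem for the cover (equivalently, a Lang--Weil estimate on the fibers), the density of $x$ with \emph{empty} fibre tends to the fixed-point-free proportion of $W$ as $p\to\infty$. Hence there is a constant $\delta>0$ and a positive-density set of primes $\mathcal{P}$ such that $\#\Pi(Z(\mathbb{F}_p))\leq(1-\delta)p^d$, i.e.\ $\mathcal{A}$ avoids at least $\delta p^d$ residues modulo each $p\in\mathcal{P}$.

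With this local input in hand, I would apply the multidimensional (box) form of the large sieve. For a sieving level $Q$ it yields a bound of the form
\[
|\mathcal{A}|\;\ll\;\frac{\prod_{i=1}^{d}\left(X^{a_i}+Q^2\right)}{L(Q)},\qquad L(Q)\;=\;\sum_{\substack{q\leq Q\\ q\ \text{squarefree}}}\ \prod_{p\mid q}\frac{p^d-\#\Pi(Z(\mathbb{F}_p))}{\#\Pi(Z(\mathbb{F}_p))}.
\]
Restricting the sum defining $L(Q)$ to $q$ supported on $\mathcal{P}$, each prime factor contributes a factor $\geq\delta/(1-\delta)$, and the standard estimate $\sum_{q\leq Q}c^{\omega(q)}\asymp Q(\log Q)^{c-1}$ gives $L(Q)\gg Q^{1-\epsilon}$. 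Choosing $Q=X^{a_1/2}$ makes $Q^2\leq X^{a_1}\leq X^{a_i}$ for every $i$, so $X^{a_i}+Q^2\ll X^{a_i}$ and the numerator is $\ll X^{\sum_i a_i}$. Combining, $|\mathcal{A}|\ll X^{\sum_i a_i}/X^{(a_1/2)(1-\epsilon)}\ll X^{(\sum_i a_i)-\tfrac12 a_1+\epsilon}$, and multiplying by the constant $D$ gives the claimed bound on integral points of $Z$.

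The step I expect to be the main obstacle is the uniform local density in the second paragraph: one must guarantee that the forbidden-residue density is bounded below by a \emph{fixed} $\delta$ for enough primes, independent of $p$. The delicacy is that $\mathbb{Q}(Z)/\mathbb{Q}(t)$ may involve a constant-field extension, so that $Z$ can become reducible modulo some primes and the fibre-emptiness density can collapse there; this is precisely why I would only claim the density drop for a positive-density set $\mathcal{P}$ of primes (those whose Frobenius meets a fixed-point-free class of the geometric monodromy) rather than for all $p$, and restrict the sieve sum accordingly. Once the existence of a fixed-point-free element and the Chebotarev/Lang--Weil equidistribution are set up carefully, the remaining sieve optimization is routine, and the smallest side $X^{a_1}$ is forced to carry the saving by the choice $Q=X^{a_1/2}$.
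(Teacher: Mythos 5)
Your proof is correct, but it takes a genuinely different route from the paper's. The paper's argument is essentially two lines: it quotes S.~D.~Cohen's multivariable quantitative Hilbert irreducibility theorem as a black box --- a saving of $N^{1/2-\epsilon}$ for a cube of side $N$, uniformly over translates --- and then tiles the lopsided box by $X^{(\sum a_i)-da_1}$ cubes of side $X^{a_1}$, each contributing $\ll X^{da_1-\frac{1}{2}a_1+\epsilon}$. You instead unwind the proof of that black box and run it directly on the lopsided box: your local input ($\#\Pi(Z(\mathbb{F}_p))\le(1-\delta)p^d$ for a positive-density set of primes, via Jordan's theorem applied to the monodromy group of the cover together with Lang--Weil/Chebotarev equidistribution of Frobenius) is precisely the engine inside Cohen's theorem, and the choice $Q=X^{a_1/2}$, forced by the shortest side in the numerator $\prod_i(X^{a_i}+Q^2)$, recovers exactly the same exponent. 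What your approach buys: the translation-invariance and box-shape flexibility that the paper only asserts are automatic in the large-sieve formulation, and you correctly isolate the one delicate point --- the arithmetic-versus-geometric monodromy discrepancy when $Z$ is irreducible over $\mathbb{Q}$ but not geometrically irreducible --- and handle it by restricting the sieve to primes whose Frobenius coset in the arithmetic monodromy group contains a fixed-point-free element (such a coset exists since the full group is transitive of degree $\ge 2$, and Chebotarev for the constant-field extension supplies the positive density of primes). What the paper's approach buys is brevity. One small observation your computation makes explicit: sieving directly in the non-square box, with $L(Q)\gg Q^{1-\epsilon}$, yields no improvement over the tiling argument, since the optimal sieve level is still governed by the shortest side.
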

\begin{proof}
First, by changing variables for $X$, we may assume that the $a_{i}$
are integers. Our starting point is a multivariable version of Hilbert's
Irreducibility Theorem due to S.D. Cohen \cite{Cohen-Hilbert}: if
$X\to\mathbb{P}^{n}$ is a morphism of degree $\geq2$, then the number
of integral points of $\mathbb{A}^{n}$ of height $\leq N$ which
lift to $X$ is $\ll N^{n-1/2+\epsilon}$.

The side length of the box in that theorem is $N$, and the result
gives a savings of $N^{1/2-\epsilon}$ on the box. The result is also
stated for a box centered at 0, but the bound (with a uniform constant)
still holds even if we translate to center the box at an arbitrary
point.

Now we tile our large box of side lengths $(X^{a_{1}},X^{a_{2}},\cdots,X^{a_{d}})$
with square boxes each of which has size $(X^{a_{1}},X^{a_{1}},\cdots,X^{a_{1}})$:
each square box yields $\ll X^{da_{1}-\frac{1}{2}a_{1}+\epsilon}$
points of $Z$ having an image in that square box, and we require
a total of $X^{(\sum a_{i})-da_{1}}$ such square boxes to cover the
large box. The result follows.\end{proof}
\begin{rem*}
There are sieving methods that work directly with non-square boxes,
and these would presumably give an additional small savings, although
we do not expect the gain to be particularly significant.
\end{rem*}
We can now prove the main theorem: 
\begin{thm}
\label{thm:Permreps} Let $n\geq2$, let $K$ be any number field,
and let $G$ be a proper transitive subgroup of $S_{n}$. Also, let
$t$ be such that if $G'$ is the intersection of a point stabilizer
in $S_{n}$ with $G$, then any subgroup of $G$ properly containing
$G'$ has index at most $t$. Then for any $\epsilon>0$, \begin{equation}
N_{K,n}(X;G)\ll X^{\tfrac{1}{2(n-t)}\left[\sum_{i=1}^{n-1}\deg(f_{i+1})-\tfrac{1}{[K:\mathbb{Q}]}\right]+\epsilon},\label{eq:permreps-bound}\end{equation}
where the $f_{i}$ for $1\leq i\leq n$ are a set of primary invariants
for $G$, whose degrees (in particular) satisfy $\deg(f_{i})\leq i$.\end{thm}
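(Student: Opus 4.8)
The plan is to combine Proposition \ref{lem:trace-zero} with the sieve of Lemma \ref{lem:Sieving-saving}, counting the integral invariants attached to a small trace-zero generator. Write $l=[K:\mathbb{Q}]$ and $d_i=\deg(f_i)$, and note that since $G$ is transitive the only linear invariant is $f_1=x_1+\cdots+x_n$, so all of $f_2,\dots,f_n$ have degree $d_i\ge2$. First I would check that the group-theoretic hypothesis on $t$ is exactly the field-theoretic hypothesis of Proposition \ref{lem:trace-zero}: under the Galois correspondence, subgroups $H$ with $G'\subsetneq H\subseteq G$ correspond to proper subfields $K'$ of $L$ containing $K$, with $[G:H]=[K':K]$, so ``every such $H$ has index $\le t$'' says exactly ``every proper $K'\supseteq K$ has $[K':K]\le t$.'' Proposition \ref{lem:trace-zero} then furnishes, for each $L$ counted by $N_{K,n}(X;G)$, an $\alpha\in\mathcal{O}_L$ with $\mathrm{Tr}_{L/K}(\alpha)=0$, $L=K(\alpha)$, and all archimedean valuations $\ll X^{1/2l(n-t)}$. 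As each $L$ produces such an $\alpha$ and $\alpha$ (up to conjugacy) recovers $L$, it suffices to bound the number of such $\alpha$.

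The $n$ embeddings of $L$ give a point $(\sigma_1(\alpha),\dots,\sigma_n(\alpha))$ on which $\mathrm{Gal}(\bar K/K)$ acts through $G\subseteq S_n$, so for any $f\in\mathbb{Z}[x_1,\dots,x_n]^G$ the value $f(\sigma_1(\alpha),\dots,\sigma_n(\alpha))$ is Galois-stable and integral, hence lies in $\mathcal{O}_K$. Choosing the primary invariants $f_i$ and secondary invariants $g_j$ with integer coefficients, this produces a point $P_\alpha\in Y(\mathcal{O}_K)$, where $Y=\mathrm{Spec}\,\mathcal{O}_K[x]^G$: here $f_1(\alpha)=\mathrm{Tr}_{L/K}(\alpha)=0$, while homogeneity gives $|f_i(\alpha)|\ll X^{d_i/2l(n-t)}$ at every archimedean place. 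Since a closed point of $\mathrm{Spec}\,\mathbb{C}[x]^G$ is a $G$-orbit of embedding tuples, $P_\alpha$ determines $\alpha$ up to conjugacy, hence $L$; so $N_{K,n}(X;G)$ is at most the number of such $P_\alpha$.

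Next I would build the cover for the sieve. The permutation representation splits off the line spanned by $f_1$, so the trace-zero slice $Y_0=\{f_1=0\}\subseteq Y$ is the quotient of $\mathbb{A}^{n-1}$ by $G$, and projection to the remaining primary invariants gives a finite map $\pi:Y_0\to\mathbb{A}^{n-1}$ whose degree is the number $k=\prod_i d_i/|G|$ of secondary invariants. The crucial point is that $k\ge2$ precisely because $G$ is a \emph{proper} transitive subgroup: by Shephard--Todd--Chevalley, $\mathbb{C}[x]^G$ is polynomial (equivalently $k=1$) iff $G$ is generated by the pseudoreflections it contains; among permutation matrices these are exactly the transpositions in $G$, and a transitive group generated by transpositions is all of $S_n$. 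Hence for proper transitive $G$ the invariant ring is not polynomial and $k\ge2$. Weil-restricting along $\mathcal{O}_K/\mathbb{Z}$ turns $\pi$ into a finite map $\Pi:Z=\mathrm{Res}_{\mathcal{O}_K/\mathbb{Z}}Y_0\to\mathbb{A}^{l(n-1)}$ of degree $k^l\ge2$, with $Z$ irreducible since $Y_0$ is geometrically irreducible, and with $Z(\mathbb{Z})$-points equal to $Y_0(\mathcal{O}_K)$-points.

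Finally I would run the sieve. Fixing a $\mathbb{Z}$-basis of $\mathcal{O}_K$, each invariant $f_i(\alpha)$ contributes $l$ integer coordinates, each $\ll X^{d_i/2l(n-t)}$, so the images $\Pi(P_\alpha)$ lie in a box centered at $0$ whose side-length exponents are the numbers $d_i/2l(n-t)$, each repeated $l$ times. Their total is $\tfrac{1}{2(n-t)}\sum_{i=2}^{n} d_i=\tfrac{1}{2(n-t)}\sum_{i=1}^{n-1}\deg(f_{i+1})$, and the smallest exponent is $d_2/2l(n-t)\ge 1/l(n-t)$ because $d_2\ge2$. Lemma \ref{lem:Sieving-saving} then bounds the number of $Z(\mathbb{Z})$-points over this box, and hence $N_{K,n}(X;G)$, by $X^{\frac{1}{2(n-t)}\sum_{i=1}^{n-1}\deg(f_{i+1})-\frac12\cdot\frac{d_2}{2l(n-t)}+\epsilon}\le X^{\frac{1}{2(n-t)}\left[\sum_{i=1}^{n-1}\deg(f_{i+1})-\frac{1}{l}\right]+\epsilon}$, which is (\ref{eq:permreps-bound}).

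I expect the main obstacle to be the cover step: identifying the correct scheme whose integral points are the invariant vectors $P_\alpha$ rather than the non-integral embedding tuples, establishing degree $\ge2$ exactly under the properness hypothesis, and verifying that the Weil restriction keeps the map finite, $Z$ irreducible, and the degree $\ge2$ so that Lemma \ref{lem:Sieving-saving} applies. Matching the box to the claimed savings is then bookkeeping; note that when $d_2>2$ the argument in fact saves more, so (\ref{eq:permreps-bound}) need not be tight.
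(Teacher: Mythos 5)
Your proposal is correct and follows the same overall strategy as the paper: Proposition \ref{lem:trace-zero} supplies a small trace-zero generator, the values of the invariants at its conjugates give an integral point on a finite cover of $\mathbb{A}^{n-1}$ of degree at least $2$, and Lemma \ref{lem:Sieving-saving} applied to the box with side exponents $\deg(f_i)/2l(n-t)$, each repeated $l$ times, produces the stated exponent with the savings coming from $\deg(f_2)\geq 2$. The two places where you diverge are both in the packaging of the cover. The paper does not pass through the full quotient $\mathbb{A}^{n-1}/G$ and a Weil restriction; instead it takes $Z=\mathrm{Spec}(S)$ where $S$ is the subring of $R/f_1R$ generated by $\bar{f}_2,\dots,\bar{f}_n$ together with a single nontrivial secondary invariant $\bar{g}_2$, so that $\Pi:Z\to\mathbb{A}^{n-1}$ has degree at least $2$ simply because $\bar{g}_2\notin\mathbb{C}[\bar{f}_2,\dots,\bar{f}_n]$, and it converts the $\mathcal{O}_K$-box into a $\mathbb{Z}$-box by choosing an integral basis rather than by an explicit $\mathrm{Res}_{\mathcal{O}_K/\mathbb{Z}}$. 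Your Shephard--Todd--Chevalley argument (a transitive group generated by transpositions is all of $S_n$, so a proper transitive $G$ has nonpolynomial invariant ring and hence $k\geq 2$ secondary invariants) is a genuine addition: the paper merely asserts that properness forces a secondary invariant beyond $g_1=1$, and your argument supplies a clean proof of exactly that assertion. Both routes arrive at the same box and the same exponent, and your observation that $d_2>2$ would give extra savings is consistent with the paper, which simply uses $\deg(f_2)=2$ for a minimal-degree choice of invariants.
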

\begin{rem*}
\label{rem:Iintermediate-extension-criterion} The condition about
the point stabilizer is (by the Galois correspondence) equivalent
to the following: if $L/K$ is a $G$-extension, then any proper subfield
$K'$ of $L$ containing $K$ has $[K':K]\leq t$. (The criterion
in the theorem statement is stated the way it is in order to avoid
any reference to $L$.) We note in particular that if $G$ is a primitive
subgroup of $S_{n}$, then $t=1$.\end{rem*}
\begin{proof}
Let $G$ act on the polynomial ring $\mathbb{C}[x_{1},\cdots,x_{n}]$
by index permutation, and let $f_{1},\cdots,f_{n}$ be primary invariants
of $G$ with associated secondary invariants $1=g_{1},g_{2},\cdots,g_{k}$,
each set arranged in order of nondecreasing degree. Observe that because
$G$ is transitive, the only primary invariant of degree 1 is $f_{1}=x_{1}+\cdots+x_{n}$,
and that because $G$ is proper, there is at least one secondary invariant
besides $g_{1}=1$.

Denote $A=\mathbb{C}[f_{1},\cdots,f_{n}]$ and $R=\mathbb{C}[x_{1},\cdots,x_{n}]^{G}$,
and observe that $\bar{R}=R/f_{1}R$ is an integral domain. Let $S$
be the subring of $\bar{R}$ generated by $\bar{f_{2}},\cdots,\bar{f_{n}}$
and $\bar{g}_{2}$, and let $Z=\mathrm{Spec}(S)$. Observe that $S$
is an integral domain (since $\bar{R}$ is) so $Z$ is irreducible.

The natural map $\mathbb{C}[f_{2},\cdots,f_{n}]\to S$ induces a projection
$\Pi:Z\to\mathbb{A}^{n-1}$ (namely, evaluation of the polynomials
$f_{2},\dots,\, f_{n}$ at the given point), and the map $\Pi$ is
finite because $R$ is a finitely-generated $A$-module (whence $\bar{R}$
is finite over $\mathbb{C}[f_{2},\cdots,f_{n}]$). Also notice that,
by construction, we have $\bar{g}_{2}\not\in\mathbb{C}[\bar{f}_{2},\cdots,\bar{f}_{n}]$,
and so $\Pi$ has degree at least 2.

Now suppose $L/K$ is an extension of number fields with $[K:\mathbb{Q}]=l$,
$[L:K]=n$, such that the Galois group of the Galois closure $\hat{L}/K$
is permutation-isomorphic to $G$, and such that $\mathrm{Nm}_{K/\mathbb{Q}}(\mathcal{D}_{L/K})<X$.
As noted in Remark \ref{rem:Iintermediate-extension-criterion}, the
condition on the group $G$ implies that any field $K'$ intermediate
between $K$ and $L$ has $[K':K]\leq t$. By Proposition \ref{lem:trace-zero},
there exists a nonzero element $\alpha\in\mathcal{O}_{L}$ of trace
zero such that all archimedean valuations of $\alpha$ are $\ll X^{\tfrac{1}{2l(n-t)}}$
and with $L=K(\alpha)$. This element $\alpha$ gives rise to an integral
point ${\bf x}=(\alpha^{(1)},\dots,\alpha^{(n)})\in Z$, where the
$\alpha^{(i)}$ are the archimedean embeddings of $\alpha$. (Note
that we are using the fact that $\alpha$ has trace zero to say that
$f_{1}({\bf x})=0$, so that ${\bf x}$ is actually well-defined on
$Z$.)

We may then obtain an upper bound on the total possible number of
fields $L$ by bounding the number of possible ${\bf x}$. But since
$\Pi$ is finite (and its degree is independent of $L$), we may equivalently
bound the number of possibilities for $\Pi({\bf x})$.

Since $\Pi$ is simply evaluation of the primary invariant polynomials
$f_{i}$ on the point ${\bf x}$, the coordinates of $\Pi(\mathbf{x})=(y_{2},\cdots,y_{n})$
obey the bounds \[
\left|y_{i}\right|\ll X^{\tfrac{\deg(f_{i})}{2l(n-t)}},\]
for $2\leq i\leq n$, which forms a {}``box'' $B$ in $\mathbb{A}^{n}(K)$.
By choosing an integral basis of $\mathcal{O}_{K}$, this box becomes
a box in $\mathbb{A}^{nl}(\mathbb{Q})$ with the same dimensions (up
to fixed constants), each occurring $l$ times, and the image of $\Pi({\bf x})$
is integral. We now apply Lemma \ref{lem:Sieving-saving} to see that
the number of possible integral points ${\bf x}$ is $\ll X^{\tfrac{1}{2(n-t)l}\left[l\sum_{i=1}^{n-1}\deg(f_{i+1})-\tfrac{1}{2}\deg(f_{2})\right]+\epsilon}$.
Finally, since $\deg(f_{2})=2$ and each ${\bf x}$ gives rise to
at most one distinct extension $L/K$, we obtain \[
N_{K,n}(X;G)\leq\#\{\text{integral }{\bf x}\in Z\ \text{with }\Pi({\bf x})\in B\}\ll X^{\tfrac{1}{2(n-t)}\left[\sum_{i=1}^{n-1}\deg(f_{i+1})-\tfrac{1}{l}\right]+\epsilon},\]
which is precisely the desired result.\end{proof}
\begin{rem*}
Note that we require the existence of a secondary invariant in order
to apply Lemma \ref{lem:Sieving-saving}. Without a secondary invariant,
we lose the power savings and instead obtain the upper bound $X^{\tfrac{1}{2(n-t)}\left[\sum_{i=1}^{n-1}\deg(f_{i+1})\right]}$.
This will only occur when $G=S_{n}$, whose primary invariants are
the usual symmetric polynomials (with degrees $2,3,\cdots,n$): it
is then easy to see that our upper bound is \[
X^{\tfrac{1}{2(n-1)}[\sum_{i=1}^{n-1}(i+1)]}=X^{\tfrac{1}{2(n-1)}[n(n+1)/2-1]}=X^{\tfrac{n+2}{4}},\]
which is precisely Schmidt's bound. Since the symmetric polynomials
are a set of primary invariants for any permutation group, we therefore
see that for any primitive proper transitive subgroup of $S_{n}$,
our theorem always beats the bound of Schmidt (due to the power-savings
from the sieving and the fact that $t=1$). However, in practice for
most primitive groups $G$, the majority of the actual savings comes
from the primary invariants, whose degrees tend to be much smaller
than the degrees of the symmetric polynomials.
\end{rem*}

\section{A Prototypical Example: $PSL_{2}(\mathbb{F}_{7})$ in $S_{7}$\label{sec:A-Generic-Example}}

In this section we give an explicit example of a primary invariant
computation, for the group $G=PSL_{2}(\mathbb{F}_{7})\cong GL_{3}(\mathbb{F}_{2})$,
which is the simple group of order 168, and appears as 7T5 in the
tables in Appendix \ref{sec:Tabulation-of-Results}.
\begin{cor}
For any $\epsilon>0$, $N_{\mathbb{Q},7}(X;G)\ll X^{11/6+\epsilon}.$
\end{cor}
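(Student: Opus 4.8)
The plan is to read off the corollary from Theorem \ref{thm:Permreps} with $n=7$, $K=\mathbb{Q}$ (so $[K:\mathbb{Q}]=1$), and $G=PSL_{2}(\mathbb{F}_{7})\cong GL_{3}(\mathbb{F}_{2})$ acting on the $7$ points of the Fano plane $PG(2,\mathbb{F}_2)$, that is, on the $7$ nonzero vectors of $\mathbb{F}_2^3$. The first step is to determine $t$. Any two distinct nonzero vectors of $\mathbb{F}_2^3$ are linearly independent and extend to a basis, so $GL_{3}(\mathbb{F}_{2})$ is $2$-transitive, hence primitive, on the $7$ points; by the Remark following Theorem \ref{thm:Permreps} this forces $t=1$. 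The exponent in \ref{eq:permreps-bound} then reduces to $\frac{1}{12}\bigl[\sum_{i=2}^{7}\deg(f_i)-1\bigr]$, so the whole problem becomes the determination of the degrees of a minimal set of primary invariants for the degree-$7$ representation.

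To find those degrees I would compute the Molien series \ref{eq:Molien-1}. Since $G$ permutes the coordinates, the cycle types on the $7$ points --- namely $1^7$ (identity), $1^3 2^2$ ($21$ involutions), $1\,3^2$ ($56$ elements of order $3$), $1\,2\,4$ ($42$ elements of order $4$), and $7^1$ ($48$ elements of order $7$) --- yield the denominators $(1-t)^7$, $(1-t)^3(1-t^2)^2$, $(1-t)(1-t^3)^2$, $(1-t)(1-t^2)(1-t^4)$, and $1-t^7$ respectively. Summing and simplifying gives a Hilbert series whose expansion begins $1+t+2t^2+4t^3+7t^4+\cdots$; this matches an independent count of $G$-orbits on size-$j$ multisets of points (using that $G$ has two orbits on $3$-subsets, the $7$ lines and the $28$ triangles), which is a useful consistency check. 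The term $48/(1-t^7)$ is the only contribution with poles at the primitive $7$th roots of unity, so $\Phi_7(t)$ survives in the reduced denominator and one primary degree must be divisible by $7$; since $\deg(f_i)\le i$, this degree is exactly $7$. The remaining degrees are then forced, by the leading Hilbert coefficients together with the constraints that $\prod_i\deg(f_i)$ be divisible by $|G|=168$ and $\mathrm{lcm}_i\deg(f_i)$ by the exponent $84$ of $G$, to be the vector $(1,2,3,3,4,4,7)$; here $\prod_i\deg(f_i)=2016=12\cdot 168$, predicting $12$ secondary invariants.

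Plugging $\sum_{i=2}^{7}\deg(f_i)=2+3+3+4+4+7=23$ into the reduced exponent yields $\frac{1}{12}(23-1)=\frac{11}{6}$, which is exactly the asserted bound $N_{\mathbb{Q},7}(X;G)\ll X^{11/6+\epsilon}$.

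The hard part is the middle step. As stressed in Section \ref{sec:Appendix-Invariants}, the Molien series only produces candidate degree vectors: a minimal-degree vector compatible with the Hilbert series need not be realized by an actual homogeneous system of parameters. So I would still have to certify that there really exist invariants of degrees $1,2,3,3,4,4,7$ that are algebraically independent and over which $\mathbb{C}[x_1,\dots,x_7]^G$ is a finite module. Concretely I would exhibit explicit invariants --- the power sum in degree $2$, $G$-orbit sums of monomials in degrees $3$ and $4$, and a degree-$7$ invariant accounting for the $\Phi_7$ factor --- and verify zero-dimensionality of the resulting ideal, which is precisely the Hilbert-driven Buchberger check performed by MAGMA. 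Once that verification is in hand, the corollary is immediate.
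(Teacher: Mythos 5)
Your proposal is correct and follows essentially the same route as the paper: invoke Theorem \ref{thm:Permreps} with $t=1$ (from primitivity) and the primary invariant degrees $1,2,3,3,4,4,7$, giving the exponent $\tfrac{1}{12}(23-1)=\tfrac{11}{6}$. The only difference is presentational --- the paper simply exhibits the explicit MAGMA-computed invariants, whereas you derive the candidate degree vector by hand from the Molien series and then (correctly) note that its realizability must still be certified by the same kind of computation.
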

For comparison, Schmidt's bound (for general degree-7 extensions)
gives an upper bound of $X^{9/4}$, and the Ellenberg-Venkatesh bound
is weaker.
\begin{proof}
Let $G=\left\langle (1\,2\,3\,4\,5\,6\,7),\ (1\,2)(3\,6)\right\rangle $;
it is a primitive permutation group on $\left\{ 1,2,3,4,5,6,7\right\} $
whose action is conjugate to the action of $PSL_{2}(\mathbb{F}_{7})$
on $\mathbb{P}^{1}(\mathbb{F}_{7})$. A computation with MAGMA shows
that primary invariants can be chosen as
\begin{eqnarray*}
f_{1} & = & x_{1}+x_{2}+x_{3}+x_{4}+x_{5}+x_{6}+x_{7}\\
f_{2} & = & x_{1}^{2}+x_{2}^{2}+x_{3}^{2}+x_{4}^{2}+x_{5}^{2}+x_{6}^{2}+x_{7}^{2}\\
f_{3} & = & x_{1}^{3}+x_{2}^{3}+x_{3}^{3}+x_{4}^{3}+x_{5}^{3}+x_{6}^{3}+x_{7}^{3}\\
f_{4} & = & x_{1}x_{2}x_{3}+x_{1}x_{2}x_{5}+x_{1}x_{2}x_{6}+x_{1}x_{2}x_{7}+x_{1}x_{3}x_{4}+x_{1}x_{3}x_{6}+x_{1}x_{3}x_{7}+x_{1}x_{4}x_{5}\\
 &  & +x_{1}x_{4}x_{6}+x_{1}x_{4}x_{7}+x_{1}x_{5}x_{6}+x_{1}x_{5}x_{7}+x_{2}x_{3}x_{4}+x_{2}x_{3}x_{5}+x_{2}x_{3}x_{7}+x_{2}x_{4}x_{5}\\
 &  & +x_{2}x_{4}x_{6}+x_{2}x_{4}x_{7}+x_{2}x_{5}x_{6}+x_{2}x_{6}x_{7}+x_{3}x_{4}x_{5}+x_{3}x_{4}x_{6}+x_{3}x_{5}x_{6}+x_{3}x_{5}x_{7}\\
 &  & +x_{3}x_{6}x_{7}+x_{4}x_{5}x_{7}+x_{4}x_{6}x_{7}+x_{5}x_{6}x_{7}\\
f_{5} & = & x_{1}^{4}+x_{2}^{4}+x_{3}^{4}+x_{4}^{4}+x_{5}^{4}+x_{6}^{4}+x_{7}^{4}\\
f_{6} & = & x_{1}^{2}x_{2}x_{3}+x_{1}^{2}x_{2}x_{5}+x_{1}^{2}x_{2}x_{6}+x_{1}^{2}x_{2}x_{7}+x_{1}^{2}x_{3}x_{4}+x_{1}^{2}x_{3}x_{6}+x_{1}^{2}x_{3}x_{7}+x_{1}^{2}x_{4}x_{5}\\
 &  & +x_{1}^{2}x_{4}x_{6}+x_{1}^{2}x_{4}x_{7}+x_{1}^{2}x_{5}x_{6}+x_{1}^{2}x_{5}x_{7}+x_{1}x_{2}^{2}x_{3}+x_{1}x_{2}^{2}x_{5}+x_{1}x_{2}^{2}x_{6}+x_{1}x_{2}^{2}x_{7}\\
 &  & +x_{1}x_{2}x_{3}^{2}+x_{1}x_{2}x_{5}^{2}+x_{1}x_{2}x_{6}^{2}+x_{1}x_{2}x_{7}^{2}+x_{1}x_{3}^{2}x_{4}+x_{1}x_{3}^{2}x_{6}+x_{1}x_{3}^{2}x_{7}+x_{1}x_{3}x_{4}^{2}\\
 &  & +x_{1}x_{3}x_{6}^{2}+x_{1}x_{3}x_{7}^{2}+x_{1}x_{4}^{2}x_{5}+x_{1}x_{4}^{2}x_{6}+x_{1}x_{4}^{2}x_{7}+x_{1}x_{4}x_{5}^{2}+x_{1}x_{4}x_{6}^{2}+x_{1}x_{4}x_{7}^{2}\\
 &  & +x_{1}x_{5}^{2}x_{6}+x_{1}x_{5}^{2}x_{7}+x_{1}x_{5}x_{6}^{2}+x_{1}x_{5}x_{7}^{2}+x_{2}^{2}x_{3}x_{4}+x_{2}^{2}x_{3}x_{5}+x_{2}^{2}x_{3}x_{7}+x_{2}^{2}x_{4}x_{5}\\
 &  & +x_{2}^{2}x_{4}x_{6}+x_{2}^{2}x_{4}x_{7}+x_{2}^{2}x_{5}x_{6}+x_{2}^{2}x_{6}x_{7}+x_{2}x_{3}^{2}x_{4}+x_{2}x_{3}^{2}x_{5}+x_{2}x_{3}^{2}x_{7}+x_{2}x_{3}x_{4}^{2}\\
 &  & +x_{2}x_{3}x_{5}^{2}+x_{2}x_{3}x_{7}^{2}+x_{2}x_{4}^{2}x_{5}+x_{2}x_{4}^{2}x_{6}+x_{2}x_{4}^{2}x_{7}+x_{2}x_{4}x_{5}^{2}+x_{2}x_{4}x_{6}^{2}+x_{2}x_{4}x_{7}^{2}\\
 &  & +x_{2}x_{5}^{2}x_{6}+x_{2}x_{5}x_{6}^{2}+x_{2}x_{6}^{2}x_{7}+x_{2}x_{6}x_{7}^{2}+x_{3}^{2}x_{4}x_{5}+x_{3}^{2}x_{4}x_{6}+x_{3}^{2}x_{5}x_{6}+x_{3}^{2}x_{5}x_{7}\\
 &  & +x_{3}^{2}x_{6}x_{7}+x_{3}x_{4}^{2}x_{5}+x_{3}x_{4}^{2}x_{6}+x_{3}x_{4}x_{5}^{2}+x_{3}x_{4}x_{6}^{2}+x_{3}x_{5}^{2}x_{6}+x_{3}x_{5}^{2}x_{7}+x_{3}x_{5}x_{6}^{2}\\
 &  & +x_{3}x_{5}x_{7}^{2}+x_{3}x_{6}^{2}x_{7}+x_{3}x_{6}x_{7}^{2}+x_{4}^{2}x_{5}x_{7}+x_{4}^{2}x_{6}x_{7}+x_{4}x_{5}^{2}x_{7}+x_{4}x_{5}x_{7}^{2}+x_{4}x_{6}^{2}x_{7}\\
 &  & +x_{4}x_{6}x_{7}^{2}+x_{5}^{2}x_{6}x_{7}+x_{5}x_{6}^{2}x_{7}+x_{5}x_{6}x_{7}^{2}\\
f_{7} & = & x_{1}^{7}+x_{2}^{7}+x_{3}^{7}+x_{4}^{7}+x_{5}^{7}+x_{6}^{7}+x_{7}^{7}\end{eqnarray*}
of degrees $1,\,2,\,3,\,3,\,4,\,4,\,7$ respectively. Invoking Theorem
\ref{thm:Permreps} yields the stated bound. (Note here that $t=1$.)
\end{proof}
We will note that the group $G=PSL_{2}(\mathbb{F}_{7})$ also appears
as a transitive subgroup of $S_{8}$ (it is 8T37 in the tables in
Appendix \ref{sec:Tabulation-of-Results}), but the upper bounds obtained
are different: as a subgroup of $S_{7}$ we obtain the bound $X^{11/6+\epsilon}$,
while as a subgroup of $S_{8}$ we obtain $X^{29/14+\epsilon}$. This
should not be surprising, as the fields being counted are different
(though related): in the $S_{7}$ case we are counting fields of degree
7 whose Galois action on the 7 complex embeddings is that of $G$,
whereas in the $S_{8}$ case we are counting fields of degree 8 whose
Galois action on the 8 complex embeddings is that of $G$.  Indeed, the predictions from Malle's heuristics also differ for these fields: the number of degree-7 $G$-extensions is expected to be approximately $X^{1/2 + \epsilon}$ while the number of degree-8 $G$-extensions is expected to be approximately $X^{1/4 + \epsilon}$.

\section{Closing Remarks}

Per Malle's heuristics, we would expect
the actual number of integral points to be (much) lower than the bound
given by Theorem \ref{thm:Permreps}. There are three ways in which
we lose accuracy:
\begin{enumerate}
\item The map associating an element ${\bf x}$ to an extension $L/K$ is
not injective: any extension has many different generators. Worse
still, there is no uniform way to account for this non-injectivity:
an extension of small discriminant will have many generators of small
archimedean norm, and thus it will show up in the count much more
frequently than an extension of larger discriminant.
\item The simple techniques employed above for counting integral points
on the scheme $Z$ give weaker bounds than could be hoped for. Most
points in affine space are not actually the image of an integral point
on $Z$, so we would not expect that the sieving lemma \ref{lem:Sieving-saving}
is sharp: it is likely only extracting a small amount of the potential
savings that should be realizable.
\item If $L/K$ has any intermediate extensions, the bound given in Lemma \ref{lem:trace-zero}
on the archimedean norm of a generator is weaker than for a primitive extension. The worst losses occur when
$L/K$ has a subfield of small index (e.g., index 2), in which case
the exponent obtained in Theorem \ref{thm:Permreps} is nearly doubled.
\end{enumerate}
One technique by which we could address the issues in (1) is that
of Ellenberg-Venkatesh \cite{EV-numberfields}: rather than counting
the number of possibilities for the single element ${\bf x}$ of trace
zero and whose archimedean valuations are small, we could instead
count the number of possibilities for an $r$-tuple of elements $({\bf x}_{1},\cdots,{\bf x}_{r})$,
each of whose archimedean valuations is small. This would provide
a stronger way of separating extensions of differing discriminants
and reduce the amount of duplication in the counting (though it cannot
entirely erase duplicate counting).

In order to address the deficiencies of (2), we would require the
use of stronger point-counting techniques. To do this, however, would require 
understanding the geometry of the scheme $Z$ in a much
deeper way. For particular groups $G$ with low-degree permutation
representations, this is (at least, theoretically) feasible, since
the primary invariants are explicitly computable. However, for large
$n$ this seems very unlikely to succeed, since the invariant theory
becomes extremely computationally demanding for $n>10$.

To deal with the deficiencies of (3), it seems likely that a more
direct analysis of the possible extension towers for extensions of
small degree over general base fields could yield significant savings,
but we will not pursue this avenue here.

As a concluding remark, one way of reinterpreting Theorem \ref{thm:Permreps} is to view it
as a result about permutation representations of groups. The invariant
theory involved in the proof carries over to general representations
$\rho$, and so one could ask: is there a way to construct an analogue of these results 
attached to an arbitrary faithful representation $\rho$? As we show
in a forthcoming paper \cite{Dummit-rhodisc}, the answer to this question is also {}``yes''.

\appendix

\section{Tabulation of Results\label{sec:Tabulation-of-Results}}

In the following tables, we give the results of the invariant computations,
performed using the algebra system MAGMA, for all proper transitive
subgroups of $S_{n}$ for $n=5,6,7,8$, along with a small number
of subgroups of $S_{9}$ for which it was possible to finish the invariant
computations within 2 days on a 4Ghz desktop computer with 1GB of
memory. We observe that for primitive transitive subgroups, the result
of Theorem \ref{thm:Permreps} is significantly better than the overall bound of Schmidt,
although the results generally do not get close to $X^{1}$ nor (a
fortiori) to the bounds in Malle's Conjecture \ref{con:(Malle,weak)}.
For imprimitive extensions, and especially in even degree (where many
extensions have an index-2 subfield), the results are frequently worse
than Schmidt's bound.

The labeling of the transitive subgroups is the standard one originally
given by Conway-Hulpke-McKay \cite{Conway-transitivegroups}. Subfield
information was obtained from John Jones' page on transitive group
data \cite{Jones-transitivegroups}, which also contains additional
detailed information about the transitive subgroups.

For brevity in the tables below, we quote the results of Theorem \ref{thm:Permreps}
only for the base field $K=\mathbb{Q}$, and we write the results
as $X^{\#}$ rather than $X^{\#+\epsilon}$ (including the bounds
conjectured by Malle). The upper bound over a general base field $K$
of degree $l$ over $\mathbb{Q}$ is (for an entry of $X^{\#}$) equal
to $X^{\#+1-\tfrac{1}{l}+\epsilon}$. Rows marked with an asterisk
are groups for which Malle's weak conjecture is known to hold. For
subgroups of $S_{5}$, we compare the results to the bound of Bhargava;
for other symmetric groups, we compare our results to that of Schmidt.

We also remark that for certain classes of groups such as the dihedral
groups, there are bounds available (e.g., from class field theory)
that are far better than Schmidt's bound.

\begin{longtable}{|c|c|c|c|c|c|c|c|}
\hline 
\multicolumn{8}{|c|}{Proper transitive subgroups of $S_{5}$}\tabularnewline
\hline 
\# & Order & Isom. to & Subfield? & Invariant Degrees & Result & Malle & Bhargava\tabularnewline
\hline 
\endfirsthead
\hline
\multicolumn{8}{|c|}{Proper transitive subgroups of $S_{5}$ (continued)}\tabularnewline
\hline 
\# & Order & Isom. to & Subfield? & Invariant Degrees & Result & Malle & Bhargava\tabularnewline
\hline 
\endhead
5T1 & 5{*} & $C_{5}$ & none & 1,2,2,3,5 & $X^{11/8}$ & $X^{1/4}$ & $X^{1}$\tabularnewline
\hline 
5T2 & 10 & $D_{5}$ & none & 1,2,2,3,5 & $X^{11/8}$ & $X^{1/2}$ & $X^{1}$\tabularnewline
\hline 
5T3 & 20 & $F_{20}$ & none & 1,2,3,4,5 & $X^{13/8}$ & $X^{1/2}$ & $X^{1}$\tabularnewline
\hline 
5T4 & 60 & $A_{5}$ & none & 1,2,3,4,5 & $X^{13/8}$ & $X^{1/2}$ & $X^{1}$\tabularnewline
\hline
\end{longtable}

\begin{longtable}{|c|c|c|c|c|c|c|c|}
\hline 
\multicolumn{8}{|c|}{Proper transitive subgroups of $S_{6}$}\tabularnewline
\hline 
\# & Ord & Isom. to & Subfield? & Invariant Degrees & Result & Malle & Schmidt\tabularnewline
\hline 
\endfirsthead
\hline
\multicolumn{8}{|c|}{Proper transitive subgroups of $S_{6}$ (continued)}\tabularnewline
\hline 
\# & Ord & Isom. to & Subfield? & Invariant Degrees & Result & Malle & Schmidt\tabularnewline
\hline
\endhead
6T1 & 6{*} & $C_{6}$ & Deg. 3 & 1,2,2,2,3,6 & $X^{7/3}$ & $X^{1/3}$ & $X^{2}$\tabularnewline
\hline 
6T2 & 6{*} & $S_{3}$ & Deg. 3 & 1,2,2,2,3,3 & $X^{11/6}$ & $X^{1/3}$ & $X^{2}$\tabularnewline
\hline 
6T3 & 12 & $S_{3}\times C_{2}$ & Deg. 3 & 1,2,2,2,3,6 & $X^{7/3}$ & $X^{1/2}$ & $X^{2}$\tabularnewline
\hline 
6T4 & 12 & $A_{4}$ & Deg. 3 & 1,2,2,3,3,4 & $X^{2}$ & $X^{1/2}$ & $X^{2}$\tabularnewline
\hline 
6T5 & 18 & $F_{18}$ & Deg. 2 & 1,2,2,3,3,6 & $X^{7/4}$ & $X^{1/2}$ & $X^{2}$\tabularnewline
\hline 
6T6 & 24 & $A_{4}\times C_{2}$ & Deg. 3 & 1,2,2,3,4,6 & $X^{8/3}$ & $X^{1}$ & $X^{2}$\tabularnewline
\hline 
6T7 & 24 & $S_{4}$ & Deg. 3 & 1,2,2,3,3,4 & $X^{13/6}$ & $X^{1/2}$ & $X^{2}$\tabularnewline
\hline 
6T8 & 24 & $S_{4}$ & Deg. 3 & 1,2,2,3,4,6 & $X^{8/3}$ & $X^{1/2}$ & $X^{2}$\tabularnewline
\hline 
6T9 & 36 & $S_{3}\times S_{3}$ & Deg. 2 & 1,2,2,3,4,6 & $X^{2}$ & $X^{1/2}$ & $X^{2}$\tabularnewline
\hline 
6T10 & 36 & $F_{36}$ & Deg. 2 & 1,2,3,3,4,6 & $X^{17/8}$ & $X^{1/2}$ & $X^{2}$\tabularnewline
\hline 
6T11 & 48 & $S_{4}\times C_{2}$ & Deg. 3 & 1,2,2,3,4,6 & $X^{8/3}$ & $X^{1}$ & $X^{2}$\tabularnewline
\hline 
6T12 & 60 & $A_{5}$ & none & 1,2,3,3,4,5 & $X^{8/5}$ & $X^{1/2}$ & $X^{2}$\tabularnewline
\hline 
6T13 & 72 & $F_{36}\rtimes C_{2}$ & Deg. 2 & 1,2,2,3,4,6 & $X^{2}$ & $X^{1}$ & $X^{2}$\tabularnewline
\hline 
6T14 & 120 & $S_{5}$ & none & 1,2,3,4,5,6 & $X^{19/10}$ & $X^{1/2}$ & $X^{2}$\tabularnewline
\hline 
6T15 & 360 & $A_{6}$ & none & 1,2,3,4,5,6 & $X^{19/10}$ & $X^{1/2}$ & $X^{2}$\tabularnewline
\hline
\end{longtable} 

\begin{longtable}{|c|c|c|c|c|c|c|c|}
\hline 
\multicolumn{8}{|c|}{Proper transitive subgroups of $S_{7}$}\tabularnewline
\hline 
\# & Order & Isom. to & Subfield? & Invariant Degrees & Result & Malle & Schmidt\tabularnewline
\hline 
\endfirsthead
\hline
\multicolumn{8}{|c|}{Proper transitive subgroups of $S_{7}$ (continued)}\tabularnewline
\hline 
\# & Ord & Isom. to & Subfield? & Invariant Degrees & Result & Malle & Schmidt\tabularnewline
\hline 
\endhead
7T1 & 7{*} & $C_{7}$ & none & 1,2,2,2,3,4,7 & $X^{19/12}$ & $X^{1/6}$ & $X^{9/4}$\tabularnewline
\hline 
7T2 & 14 & $D_{7}$ & none & 1,2,2,2,3,4,7 & $X^{19/12}$ & $X^{1/3}$ & $X^{9/4}$\tabularnewline
\hline 
7T3 & 21 & $F_{21}$ & none & 1,2,3,3,3,4,7 & $X^{7/4}$ & $X^{1/4}$ & $X^{9/4}$\tabularnewline
\hline 
7T4 & 42 & $F_{42}$ & none & 1,2,3,3,4,6,7 & $X^{2}$ & $X^{1/3}$ & $X^{9/4}$\tabularnewline
\hline 
7T5 & 168 & $PSL_{2}(\mathbb{F}_{7})$ & none & 1,2,3,3,4,4,7 & $X^{11/6}$ & $X^{1/2}$ & $X^{9/4}$\tabularnewline
\hline 
7T6 & 2520 & $A_{7}$ & none & 1,2,3,4,5,6,7 & $X^{13/6}$ & $X^{1/2}$ & $X^{9/4}$\tabularnewline
\hline
\end{longtable}

\begin{longtable}{|c|c|c|c|c|c|c|c|}
\hline 
\multicolumn{8}{|c|}{Proper transitive subgroups of $S_{8}$}\tabularnewline
\hline 
\# & Order & Isom. to & Subfield? & Invariant Degrees & Result & Malle & Schmidt\tabularnewline
\hline 
\endfirsthead
\hline
\multicolumn{8}{|c|}{Proper transitive subgroups of $S_{8}$ (continued)}\tabularnewline
\hline 
\# & Order & Isom. to & Subfield? & Invariant Degrees & Result & Malle & Schmidt\tabularnewline
\hline 
\endhead
8T1 & 8{*} & $C_{8}$ & Deg. 4 & 1,2,2,2,2,3,4,8 & $X^{11/4}$ & $X^{1/4}$ & $X^{5/2}$\tabularnewline
\hline 
8T2 & 8{*} & $C_{4}\times C_{2}$ & Deg. 4 & 1,2,2,2,2,2,4,4 & $X^{17/8}$ & $X^{1/4}$ & $X^{5/2}$\tabularnewline
\hline 
8T3 & 8{*} & $(C_{2})^{3}$ & Deg. 4 & 1,2,2,2,2,2,2,2 & $X^{13/8}$ & $X^{1/4}$ & $X^{5/2}$\tabularnewline
\hline 
8T4 & 8{*} & $D_{4}$ & Deg. 4 & 1,2,2,2,2,2,4,4 & $X^{17/8}$ & $X^{1/4}$ & $X^{5/2}$\tabularnewline
\hline 
8T5 & 8{*} & $Q_{8}$ & Deg. 4 & 1,2,2,2,2,4,4,4 & $X^{19/8}$ & $X^{1/4}$ & $X^{5/2}$\tabularnewline
\hline 
8T6 & 16{*} &  & Deg. 4 & 1,2,2,2,2,3,4,8 & $X^{11/4}$ & $X^{1/3}$ & $X^{5/2}$\tabularnewline
\hline 
8T7 & 16{*} &  & Deg. 4 & 1,2,2,2,3,4,4,8 & $X^{3}$ & $X^{1/2}$ & $X^{5/2}$\tabularnewline
\hline 
8T8 & 16{*} &  & Deg. 4 & 1,2,2,2,3,4,4,8 & $X^{3}$ & $X^{1/3}$ & $X^{5/2}$\tabularnewline
\hline 
8T9 & 16{*} & $D_{4}\rtimes C_{2}$ & Deg. 4 & 1,2,2,2,2,2,4,4 & $X^{17/8}$ & $X^{1/2}$ & $X^{5/2}$\tabularnewline
\hline 
8T10 & 16{*} &  & Deg. 4 & 1,2,2,2,2,3,4,4 & $X^{9/4}$ & $X^{1/2}$ & $X^{5/2}$\tabularnewline
\hline
8T11 & 16{*} &  & Deg. 4 & 1,2,2,2,2,4,4,4 & $X^{19/8}$ & $X^{1/2}$ & $X^{5/2}$\tabularnewline
\hline 
8T12 & 24 & $SL_{2}(\mathbb{F}_{3})$ & Deg. 4 & 1,2,2,3,3,4,4,6 & $X^{23/8}$ & $X^{1/4}$ & $X^{5/2}$\tabularnewline
\hline 
8T13 & 24 & $A_{4}\times C_{2}$ & Deg. 4 & 1,2,2,2,3,3,4,6 & $X^{21/8}$ & $X^{1/4}$ & $X^{5/2}$\tabularnewline
\hline 
8T14 & 24 & $S_{4}$ & Deg. 4 & 1,2,2,2,3,4,4,6 & $X^{11/4}$ & $X^{1/4}$ & $X^{5/2}$\tabularnewline
\hline 
8T15 & 32{*} &  & Deg. 4 & 1,2,2,2,3,4,4,8 & $X^{3}$ & $X^{1/2}$ & $X^{5/2}$\tabularnewline
\hline 
8T16 & 32{*} &  & Deg. 4 & 1,2,2,2,3,4,4,8 & $X^{3}$ & $X^{1/2}$ & $X^{5/2}$\tabularnewline
\hline 
8T17 & 32{*} &  & Deg. 4 & 1,2,2,2,3,4,4,8 & $X^{3}$ & $X^{1/2}$ & $X^{5/2}$\tabularnewline
\hline 
8T18 & 32{*} &  & Deg. 4 & 1,2,2,2,2,3,4,4 & $X^{9/4}$ & $X^{1/2}$ & $X^{5/2}$\tabularnewline
\hline 
8T19 & 32{*} &  & Deg. 4 & 1,2,2,2,3,4,4,4 & $X^{5/2}$ & $X^{1/2}$ & $X^{5/2}$\tabularnewline
\hline 
8T20 & 32{*} &  & Deg. 4 & 1,2,2,2,3,4,4,4 & $X^{5/2}$ & $X^{1/2}$ & $X^{5/2}$\tabularnewline
\hline 
8T21 & 32{*} &  & Deg. 4 & 1,2,2,2,2,4,4,4 & $X^{19/8}$ & $X^{1/2}$ & $X^{5/2}$\tabularnewline
\hline 
8T22 & 32{*} &  & Deg. 4 & 1,2,2,2,2,4,4,4 & $X^{19/8}$ & $X^{1/2}$ & $X^{5/2}$\tabularnewline
\hline 
8T23 & 48 & $GL_{2}(\mathbb{F}_{3})$ & Deg. 4 & 1,2,2,3,3,4,6,8 & $X^{27/8}$ & $X^{1/3}$ & $X^{5/2}$\tabularnewline
\hline 
8T24 & 48 & $S_{4}\times C_{2}$ & Deg. 4 & 1,2,2,2,3,4,4,6 & $X^{11/4}$ & $X^{1/2}$ & $X^{5/2}$\tabularnewline
\hline 
8T25 & 56 & $F_{56}$ & none & 1,2,3,4,4,4,4,7 & $X^{27/14}$ & $X^{1/4}$ & $X^{5/2}$\tabularnewline
\hline 
8T26 & 64{*} &  & Deg. 4 & 1,2,2,2,3,4,4,8 & $X^{3}$ & $X^{1/2}$ & $X^{5/2}$\tabularnewline
\hline 
8T27 & 64{*} &  & Deg. 4 & 1,2,2,2,3,4,4,8 & $X^{3}$ & $X^{1}$ & $X^{5/2}$\tabularnewline
\hline 
8T28 & 64{*} &  & Deg. 4 & 1,2,2,2,3,4,4,8 & $X^{3}$ & $X^{1/2}$ & $X^{5/2}$\tabularnewline
\hline 
8T29 & 64{*} &  & Deg. 4 & 1,2,2,2,3,4,4,4 & $X^{5/2}$ & $X^{1/2}$ & $X^{5/2}$\tabularnewline
\hline 
8T30 & 64{*} &  & Deg. 4 & 1,2,2,2,3,4,4,8 & $X^{3}$ & $X^{1/2}$ & $X^{5/2}$\tabularnewline
\hline
8T31 & 64{*} &  & Deg. 4 & 1,2,2,2,2,4,4,4 & $X^{19/8}$ & $X^{1}$ & $X^{5/2}$\tabularnewline
\hline 
8T32 & 96 &  & Deg. 4 & 1,2,2,3,3,4,4,6 & $X^{23/8}$ & $X^{1/2}$ & $X^{5/2}$\tabularnewline
\hline 
8T33 & 96 & $(C_{2})^{2}\rtimes C_{6}$ & Deg. 2 & 1,2,2,3,4,4,4,6 & $X^{2}$ & $X^{1/2}$ & $X^{5/2}$\tabularnewline
\hline 
8T34 & 96 & $(E_{4})^{2}\rtimes D_{6}$ & Deg. 2 & 1,2,2,3,4,4,4,6 & $X^{2}$ & $X^{1/2}$ & $X^{5/2}$\tabularnewline
\hline 
8T35 & 128{*} &  & Deg. 4 & 1,2,2,2,3,4,4,8 & $X^{3}$ & $X^{1}$ & $X^{5/2}$\tabularnewline
\hline 
8T36 & 168 & $(C_{2})^{3}\rtimes F_{21}$ & none & 1,2,3,4,4,5,6,7 & $X^{15/7}$ & $X^{1/4}$ & $X^{5/2}$\tabularnewline
\hline 
8T37 & 168 & $PSL_{2}(\mathbb{F}_{7})$ & none & 1,2,3,4,4,4,6,7 & $X^{29/14}$ & $X^{1/4}$ & $X^{5/2}$\tabularnewline
\hline 
8T38 & 192 &  & Deg. 4 & 1,2,2,3,3,4,6,8 & $X^{27/8}$ & $X^{1}$ & $X^{5/2}$\tabularnewline
\hline 
8T39 & 192 &  & Deg. 4 & 1,2,2,3,3,4,4,6 & $X^{23/8}$ & $X^{1/2}$ & $X^{5/2}$\tabularnewline
\hline 
8T40 & 192 &  & Deg. 4 & 1,2,2,3,3,4,6,8 & $X^{27/8}$ & $X^{1/2}$ & $X^{5/2}$\tabularnewline
\hline 
8T41 & 192 & $(C_{2})^{3}\rtimes S_{4}$ & Deg. 2 & 1,2,2,3,4,4,4,6 & $X^{2}$ & $X^{1/2}$ & $X^{5/2}$\tabularnewline
\hline 
8T42 & 288 &  & Deg. 2 & 1,2,2,3,4,4,6,6 & $X^{13/6}$ & $X^{1/2}$ & $X^{5/2}$\tabularnewline
\hline 
8T43 & 336 & $PGL_{2}(\mathbb{F}_{7})$ & none & 1,2,3,4,4,6,7,8 & $X^{33/14}$ & $X^{1/3}$ & $X^{5/2}$\tabularnewline
\hline 
8T44 & 384 &  & Deg. 4 & 1,2,2,3,3,4,6,8 & $X^{27/8}$ & $X^{1}$ & $X^{5/2}$\tabularnewline
\hline 
8T45 & $2^{6}3^{2}$ &  & Deg. 2 & 1,2,2,3,4,4,6,8 & $X^{7/3}$ & $X^{1/2}$ & $X^{5/2}$\tabularnewline
\hline 
8T46 & $2^{6}3^{2}$ &  & Deg. 2 & 1,2,2,3,4,4,6,8 & $X^{7/3}$ & $X^{1/2}$ & $X^{5/2}$\tabularnewline
\hline 
8T47 & $2^{7}3^{2}$ &  & Deg. 2 & 1,2,2,3,4,4,6,8 & $X^{7/3}$ & $X^{1}$ & $X^{5/2}$\tabularnewline
\hline 
8T48 & $2^{6}3^{1}7^{1}$ & $AL(8)$ & none & 1,2,3,4,4,5,6,7 & $X^{15/7}$ & $X^{1/2}$ & $X^{5/2}$\tabularnewline
\hline 
8T49 & $8!/2$ & $A_{8}$ & none & 1,2,3,4,5,6,7,8 & $X^{17/7}$ & $X^{1/2}$ & $X^{5/2}$\tabularnewline
\hline
\end{longtable}

\begin{longtable}{|c|c|c|c|c|c|c|c|}
\hline 
\multicolumn{8}{|c|}{Some transitive subgroups of $S_{9}$}\tabularnewline
\hline 
\# & Order & Isom. to & Subfield? & Invariant Degrees & Result & Malle & Schmidt\tabularnewline
\hline 
\endfirsthead
\hline
\multicolumn{8}{|c|}{Some transitive subgroups of $S_{9}$ (continued)}\tabularnewline
\hline 
\# & Order & Isom. to & Subfield? & Invariant Degrees & Result & Malle & Schmidt\tabularnewline
\hline 
\endhead
9T3 & 18 & $D_{9}$ & Deg. 3 & 1,2,2,2,2,3,3,5,8 & $X^{13/6}$ & $X^{1/4}$ & $X^{11/4}$\tabularnewline
\hline 
9T4 & 18 & $S_{3}\times C_{3}$ & Deg. 3 & 1,2,2,2,3,3,3,3,6 & $X^{23/12}$ & $X^{1/3}$ & $X^{11/4}$\tabularnewline
\hline 
9T5 & 18{*} & $(C_{3})^{2}\rtimes C_{2}$ & Deg. 3 & 1,2,2,2,2,3,3,3,3 & $X^{19/12}$ & $X^{1/4}$ & $X^{11/4}$\tabularnewline
\hline 
9T8 & 36 & $S_{3}\times S_{3}$ & Deg. 3 & 1,2,2,2,3,3,3,4,6 & $X^{2}$ & $X^{1/3}$ & $X^{11/4}$\tabularnewline
\hline
\end{longtable}

\section*{Acknowledgements}

The author would like to thank Jordan Ellenberg and Akshay Venkatesh
for their work which inspired this paper, John Voight for computing assistance with MAGMA, David Dummit and Dinesh Thakur for their helpful editorial comments, and Jordan Ellenberg in particular for his comments and persistent encouragement.

\bibliography{cnfbib}

\begin{thebibliography}{10}

\bibitem{Baily}
A.~M. Baily, \emph{On the density of discriminants of quartic fields}, J. reine
  angew. Math \textbf{315} (1980) 190--210.

\bibitem{Belabas-Bhargava-Pomerance}
K.~Belabas, M.~Bhargava, and C.~Pomerance, \emph{Error estimates for the
  Davenport-Heilbronn theorems}, Duke Mathematical Journal \textbf{153} (2010),
  no.~1,  173--210.

\bibitem{Bhargava-3}
M.~Bhargava, \emph{The density of discriminants of quartic rings and fields},
  Annals of Mathematics  (2005) 1031--1063.

\bibitem{Bhargava-4}
---{}---{}---, \emph{The density of discriminants of quintic rings and fields},
  Annals of Mathematics  (2010) 1559--1591.

\bibitem{Bhargava-Shankar-Tsimerman}
M.~Bhargava, A.~Shankar, and J.~Tsimerman, \emph{On the Davenport--Heilbronn
  theorems and second order terms}, Inventiones Mathematicae \textbf{193}
  (2013), no.~2,  439--499.

\bibitem{Bhargava-Shankar-Wang}
M.~Bhargava, A.~Shankar, and X.~Wang, \emph{Geometry-of-numbers methods over
  global fields I: Prehomogeneous vector spaces}, arXiv preprint
  arXiv:1512.03035  (2015)\hskip -.1cm.

\bibitem{Bhargava-Wood}
M.~Bhargava and M.~Wood, \emph{The density of discriminants of $S_{3}$-sextic
  number fields}, Proceedings of the American Mathematical Society \textbf{136}
  (2008), no.~5,  1581--1587.

\bibitem{Cohen-1}
H.~Cohen, \emph{Constructing and Counting Number Fields}, Proceedings of the
  ICM \textbf{2} (2002) 129--138.

\bibitem{Cohen-2}
H.~Cohen, F.~D. y~Diaz, and M.~Olivier, \emph{A survey of discriminant
  counting}, in International Algorithmic Number Theory Symposium, 80--94,
  Springer (2002).

\bibitem{Cohen-Hilbert}
S.~D. Cohen, \emph{The distribution of Galois groups and Hilbert's
  irreducibility theorem}, Proceedings of the London Mathematical Society
  \textbf{3} (1981), no.~2,  227--250.

\bibitem{Conway-transitivegroups}
J.~H. Conway, A.~Hulpke, and J.~McKay, \emph{On transitive permutation groups},
  LMS Journal of Computation and Mathematics \textbf{1} (1998) 1--8.

\bibitem{Datskovsky-Wright}
B.~Datskovsky and D.~J. Wright, \emph{Density of discriminants of cubic
  extensions}, J. reine angew. Math \textbf{386} (1988) 116--138.

\bibitem{Davenport-Heilbronn}
H.~Davenport and H.~Heilbronn, \emph{On the density of discriminants of cubic
  fields. II}, Proceedings of the Royal Society of London. Series A,
  Mathematical and Physical Sciences  (1971) 405--420.

\bibitem{Derksen-Kemper}
H.~Derksen and G.~Kemper, Computational invariant theory, Springer (2015).

\bibitem{Dummit-rhodisc}
E.~P. Dummit, \emph{The $\rho$-discriminant and applications}. Unpublished
  preprint.

\bibitem{EV-functionfields}
J.~S. Ellenberg and A.~Venkatesh, \emph{Counting extensions of function fields
  with bounded discriminant and specified Galois group}, in Geometric methods
  in algebra and number theory, 151--168, Springer (2005).

\bibitem{EV-numberfields}
---{}---{}---, \emph{The number of extensions of a number field with fixed
  degree and bounded discriminant}, Annals of Mathematics  (2006) 723--741.

\bibitem{Jones-transitivegroups}
J.~Jones, Transitive Group Data (accessed October 2016).
  \href{http://hobbes.la.asu.edu/Groups/}{http://hobbes.la.asu.edu/Groups/}.

\bibitem{Kable-Yukie}
A.~C. Kable and A.~Yukie, \emph{On the number of quintic fields}, Inventiones
  Mathematicae \textbf{160} (2005), no.~2,  217--259.

\bibitem{Kluners}
J.~Kl{\"u}ners, \emph{A counter example to Malle's conjecture on the
  asymptotics of discriminants}, Comptes Rendus Mathematique \textbf{340}
  (2005), no.~6,  411--414.

\bibitem{Kluners-2}
---{}---{}---, \emph{The distribution of number fields with wreath products as
  Galois groups}, International Journal of Number Theory \textbf{8} (2012),
  no.~03,  845--858.

\bibitem{Kluners-Malle}
J.~Kl{\"u}ners and G.~Malle, \emph{Counting nilpotent Galois extensions}, J.
  reine angew. Math  (2004) 1--26.

\bibitem{Maki}
S.~M{\"a}ki, On the density of abelian number fields, Vol.~54, Suomalainen
  tiedeakatemia (1985).

\bibitem{Malle-1}
G.~Malle, \emph{On the distribution of Galois groups}, Journal of Number Theory
  \textbf{92} (2002), no.~2,  315--329.

\bibitem{Malle-2}
---{}---{}---, \emph{On the distribution of Galois groups, II}, Experimental
  Mathematics \textbf{13} (2004), no.~2,  129--135.

\bibitem{Schmidt}
W.~M. Schmidt, \emph{Number fields of given degree and bounded discriminant},
  Ast{\'e}risque \textbf{228} (1995), no.~4,  189--195.

\bibitem{Shankar-Tsimerman}
A.~Shankar and J.~Tsimerman, \emph{Counting $S_5$-fields with a power-saving
  error term}, in Forum of Mathematics, Sigma, Vol.~2, e13, Cambridge Univ
  Press (2014).

\bibitem{Siegel}
C.~L. Siegel, Lectures on the Geometry of Numbers, Springer Science \& Business
  Media (2013).

\bibitem{Taniguchi-Thorne}
T.~Taniguchi and F.~Thorne, \emph{Secondary terms in counting functions for
  cubic fields}, Duke Mathematical Journal \textbf{162} (2013), no.~13,
  2451--2508.

\bibitem{Turkelli}
S.~T{\"u}rkelli, \emph{Connected components of Hurwitz Schemes and Malle's
  conjecture}, Journal of Number Theory \textbf{155} (2015) 163--201.

\bibitem{Wright}
D.~J. Wright, \emph{Distribution of discriminants of abelian extensions},
  Proceedings of the London Mathematical Society \textbf{3} (1989), no.~1,
  17--50.

\bibitem{Yukie}
A.~Yukie, Shintani zeta functions, Vol. 183, Cambridge University Press (1993).

\end{thebibliography}
\bibliographystyle{mrl}

\end{document}